\documentclass[12pt]{article}
\usepackage[pdftex]{graphicx} 
\usepackage{subfig}
\usepackage[utf8]{inputenc}
\usepackage{hyperref}
\usepackage{amsmath}
\usepackage{amsthm}
\usepackage{amsfonts}

\newtheorem{theorem}{Theorem}
\newtheorem{prop}{Proposition}

\DeclareMathOperator{\Res}{Resultant}
\newcommand{\tr}[1]{\mathop{{\rm tr} #1}}

\title{Bautin bifurcation in a minimal model of immunoediting}
\author{
Joaquín Delgado\thanks{Departamento de Matemáticas, UAM-Iztapalapa Av. San Rafael Atlixco 186, Col. Vicentina, C.P. 09340 CDMX. Email: \href{mailto:jdf@xanum.uam.mx}{jdf@xanum.uam.mx}} 
\and Eymard Hern\'andez\thanks{Posgrado en Ciencias Naturales e Ingeniería, UAM-Cuajimalpa. Av. Vasco de Quiroga 4871, Col. Santa Fé Cuajimalpa, C.P. 05348, CDMX. Email: \href{mailto:eymardh7@gmail.com}{eymardh7@gmail.com} }
\and 
Luc\'{\i}a Ivonne Hern\'andez-Mart\'{\i}nez\thanks{Universidad Autónoma de la Ciudad de México, Plantel San Lorenzo Tezonco. Calle Prolongación San Isidro 151, Col. San Lorenzo Tezonco, C.P. 09790 CDMX. Email: \href{mailto:eivhernandezster@gmail.com}{ivhernandezster@gmail.com } }
}

 \begin{document}
 
\maketitle
\begin{abstract} One of the simplest model of 
  immune surveillance and neaoplasia was proposed by Delisi and Resigno~\cite{Delisi}. Later Liu et al~\cite{Dan} proved the existence of  non-degenerate Takens-Bogdanov bifurcations defining a surface in  the whole set of five positive parameters. In this paper we prove the existence of  Bautin bifurcations completing the  scenario of possible codimension two bifurcations  that occur in this model. We give an interpretation of our results in terms of the three phases   immunoediting theory:elimination, equilibrium and escape.
\end{abstract}
\noindent\textbf{Key words}: Bautin bifurcation, cancer modeling, immunoediting.

\noindent\textbf{2000 AMS classification:} Primary: 34C23, 34C60; Secondary: 37G15.

\section{Introduction}
Immune edition conceptualices the development of cancer in three phases \cite{Kim}. In the first one, formerly known as immune surveillance, the complex of the immune system eliminates cancer cells originating from an intrinsic fail in the supresor mechanisms. When some part of  cancer cells are eliminated an equilibrium between the immune system and the population of cancer cells is achieved, leading to a durming state. Then the cancer cells accumulate genetic and epigenetic alterations in the DNA that generate specific stress-induced antigens. When a disbalance of the cancer polulation occurs the explosive phase appear with a fast growth of tumor cells. One of the simplest models in the first stage of the immune edition framework, based on a previous model of Bell \cite{Bell}, is due to Delisi and Resigno \cite{Delisi}. They model  the population of cancer cells and lymphocites  as a predator--prey system. The cancer tumor grows in the early stage as a spherical tumor that protects the inner cancer cells. Only the cancer cells on the surface of the tumor interact with the lymphocites. Under proper hypotheses on the balance of the total cancer cells and allometric growth, they propose a model of two ODEs depending on five parameters. 

Years after, Liu, Ruan and Zhu \cite{Dan}, study the nonvascularized model of \cite{Delisi} and prove that a Takens-Bogdanov bifurcation of codimension two occurs. 

The nonvascularized model of Delisi is
\begin{equation}\label{Delisi0}
\begin{array}{lcll}
\frac{dx}{dt} &=& -\lambda_1 x + \frac{\alpha_1 x y^{2/3}}{1+x}\left(1-\frac{x}{x_c}\right),&\\
\frac{dy}{dt} &=& \lambda_2 y -\frac{\alpha_2 x y^{2/3}}{1+x}
\end{array}
\end{equation}
where $x$ is the number of free lymphocites that are not bounded to cancer cells, $y$ is the total number of cancer cells in adimensional variables. The fractional power is the result of assuming an allometric law of the number of cancer cells on the surface of an spherical tumor. Obviously the model is not well suited for $y=0$ which correspond to the initial tumor cell being a point. In fact the theorem of uniqueness of solutions does not hold for inital conditions of the form $(x_0,0)$.
After a change of variables $\bar{x}=x$, $\bar{y}=y^{1/3}$, perform the next reparametrization 

\begin{equation}\label{Repara}
   \frac{dt}{d\bar{t}}= 1+x,
\end{equation}
and droping the bars the system becomes the polynomial system

\begin{equation}\label{Delisi}
\begin{array}{lcll}
      \frac{dx}{dt}&=&-\lambda_{1} x (1+x) +\alpha_{1} \left(1-\frac{x}{x_{c}}\right)x y^{2}&\\
    \frac{dy}{dt}&=&\lambda_{2}(1+x)y-\alpha_{2}x,
\end{array}
\end{equation}
Consider $(x_{0},y_{0})$ critical point of the system, then 
\begin{eqnarray}
y_0 &=&\frac{\alpha_2x_0}{\lambda_2(1+x_0)}\label{y0}\\
\frac{\lambda_1\lambda_2^2}{\alpha_1\alpha_2^2}&=&\frac{x_0^2(1-x_{0}/x_c)}{(1+x_0)^3}\label{x0}
\end{eqnarray}
Therefore the abscissa $x_0$ of the critical points are determined by the roots of the cubic polynomial  (\ref{x0}). In what follows the combination of parameters 
\begin{equation}
    \psi\equiv\frac{\lambda_1\lambda_2^2}{\alpha_1\alpha_2^2},\quad \lambda=\frac{\lambda_2}{\lambda_1}
\end{equation}
will be very useful.
In particular the critical points can be described by the catastrophe surface
\begin{equation}\label{cubic}
  \Sigma=\{(\psi,x_c,x_0)\mid   x_0^2(1-x_0/x_c)-\psi (1+x_0)^3=0\}.
\end{equation}
in the space of parameters $\psi$--$x_c$ and abscissa $x_0$. This surface is shown in Figure~1. The plane $x_0=0$ correspond to the trivial critical point $(0,0)$ and is a saddle. The red line shows a case of value of the parameters $(\psi,x_c)$ such that there are three critical points determined by  their $x_0$ abscissa. At a point where the surface folds back, the number of critical point is three, counting the trivial one. The projection of this folding is given by the discriminant of the cubic,
\begin{equation}\label{discriminant}
    \Delta= 4x_c^2-27 (1+x_c)^2 \psi =0,\quad\mbox{or}\quad
     \psi= \frac{4x_c^2}{27(1+x_c)^2},
\end{equation}
defines a curve in the parameter plane  $\psi$--$x_c$ where the projection $(\psi,x_c,x_0)\mapsto x_0$ restricted to $\Sigma$ looses range and the catastrophe surface folds back.

\begin{figure}[t]
    \centering
    \includegraphics[height=2.5in]
    {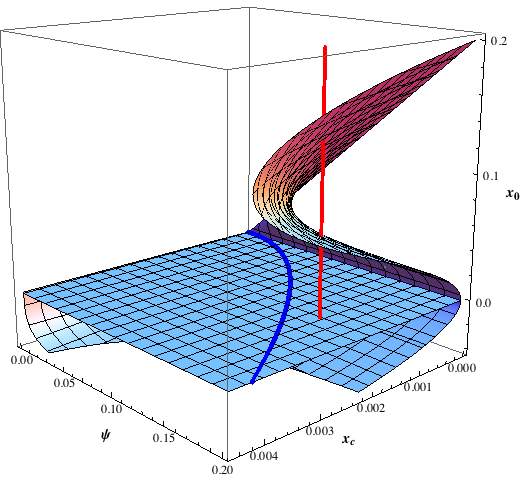}
    \caption{The catastrophe surface in coordidantes $(\psi, x_c, x)$, $x_0$ the abscisa of the critical point.  For a given value of  $(\psi,x_c)$ there are up to two critical points with $x_0>0$ and the trivial critical point corresponding to $x_0=0$. Notice that there are critical points with $x_0<0$ that are not considered. The foldding of the surface projects into the saddle--node curve given by (\ref{discriminant}) in the plane $\psi$--$x_c$.}
    \label{fig:surface}
\end{figure}

The rest of the paper is organized as follows: In section 2 we summarize the results of Liu et al regarding the existence of saddle--node and Takens--Bogdanov bifurcations. In section 3 we state the main result of this paper, the existence of Bautin bifurcations and describe it explicitly in terms of a proper parametrization. We give the main idea of the proof and the details are posponed to the Appendix A. The global bifurcation diagram is completed numerically with MatCont using the local diagrams of the Takens-Bogdanov and Bautin bifurcation as  described in the Appendix C. In section 4  we  describe the phase portraits derived from the global bifurcation diagram and represented schematically in Figure~\ref{schema}. Finally in Section 5 we give an interpretation of our results. In Appendix C we describe briefly how the numerical continuation with MatCont was performed.

\section{Saddle node and Hopf bifurcations}
The following two results sumarizes the results by Liu et al \cite{Dan}.
\begin{prop}[Liu et al]
The parameter set
$$
SN=\left\{(\lambda_1,\lambda_2,\alpha_1,\alpha_2,x_c)\mid \psi= \frac{4x_c^2}{27(1+x_c)^2},\, \lambda\neq\frac{2(3+x_c)}{3(1+x_c)} \right\}
$$
are saddle node bifurcations of system (\ref{Delisi}). The phase portrait consists of two hyperbolic and one parabolic sectors.
\end{prop}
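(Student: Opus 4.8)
The statement is due to Liu, Ruan and Zhu~\cite{Dan}; here is the line of argument I would follow. The plan is to apply Sotomayor's saddle--node bifurcation theorem at a nontrivial equilibrium $(x_0,y_0)$ of (\ref{Delisi}), using $\lambda_1$ (which enters only through $\psi$) as the unfolding parameter. First I would evaluate the Jacobian $J=DF(x_0,y_0)$ and reduce every entry with the equilibrium identities (\ref{y0})--(\ref{x0}) together with $\alpha_1(1-x_0/x_c)y_0^2=\lambda_1(1+x_0)$; this gives
$$f_x=-\frac{\lambda_1 x_0(1+x_c)}{x_c-x_0},\quad f_y=\frac{2\lambda_1\lambda_2(1+x_0)^2}{\alpha_2},\quad g_x=-\frac{\alpha_2}{1+x_0},\quad g_y=\lambda_2(1+x_0),$$
hence $\det J=\lambda_1\lambda_2(1+x_0)\,\dfrac{2x_c-(x_c+3)x_0}{x_c-x_0}$. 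Thus $\det J=0$ iff $x_0=\dfrac{2x_c}{x_c+3}$; on the other hand, eliminating $\psi$ between the cubic (\ref{x0}) and its $x_0$--derivative produces exactly $2x_c-(x_c+3)x_0=0$, so $\det J=0$ is precisely the condition that $x_0$ be a double root of (\ref{x0}) --- equivalently that $(\psi,x_c)$ lie on the curve (\ref{discriminant}) (feeding $x_0=2x_c/(x_c+3)$ back into (\ref{x0}) indeed returns $\psi=\tfrac{4x_c^2}{27(1+x_c)^2}$). At such a fold point $1+x_0=\dfrac{3(x_c+1)}{x_c+3}$, so $\mathrm{tr}\,J=f_x+g_y=-2\lambda_1+\dfrac{3\lambda_2(x_c+1)}{x_c+3}$, which vanishes iff $\lambda=\dfrac{2(3+x_c)}{3(1+x_c)}$. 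Therefore on $SN$ the matrix $J$ has a simple zero eigenvalue and a nonzero second eigenvalue.

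Next I would verify the two genericity conditions. Right and left null vectors of $J$ are $v=(\lambda_2(1+x_0)^2,\alpha_2)$ and $w=(-\alpha_2,2\lambda_1(1+x_0))$, and $w\cdot v=-\alpha_2(1+x_0)\,\mathrm{tr}\,J\neq0$ on $SN$ (so the zero eigenvalue is indeed simple). Transversality with respect to $\mu=\lambda_1$ is immediate, since $\partial_{\lambda_1}F=(-x(1+x),0)$ and hence $w^{\top}\partial_{\lambda_1}F=\alpha_2 x_0(1+x_0)\neq0$. For the quadratic coefficient, the only nonzero second derivative of $g$ is $g_{xy}=\lambda_2$, so only $f_{xx},f_{xy},f_{yy}$ and $g_{xy}$ are involved; evaluating $w^{\top}D^2F(v,v)$ at the fold, where additionally $\alpha_1 y_0^2=3\lambda_1$, there is substantial cancellation and one is left with
$$w^{\top}D^2F(v,v)=\frac{81\,\alpha_2\,\lambda_1\lambda_2^2(1+x_c)^3}{x_c(x_c+3)^2}>0$$
for every $x_c>0$. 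In particular this nondegeneracy holds on all of $SN$, so no restriction beyond $\mathrm{tr}\,J\neq0$ is needed, and Sotomayor's theorem yields a generic saddle--node bifurcation along $SN$.

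Finally, because the zero eigenvalue is simple and the flow reduced to the centre manifold has nonzero quadratic part --- which is exactly the statement $w^{\top}D^2F(v,v)\neq0$ after the reduction --- the equilibrium is a semi--hyperbolic point of saddle--node type, and by the classical local classification of such singularities (Andronov et al.; Dumortier--Llibre--Art\'es) its phase portrait consists of two hyperbolic sectors and one parabolic sector, as stated. The only genuine computation is the simplification of $\det J$, $\mathrm{tr}\,J$ and $w^{\top}D^2F(v,v)$; the first two are short, and the third, which is the step I would expect to be the obstacle, collapses cleanly once the fold identities $x_0=2x_c/(x_c+3)$ and $\alpha_1 y_0^2=3\lambda_1$ are used.
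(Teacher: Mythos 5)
Your proposal is correct, but note that the paper itself offers no proof of this proposition: it is quoted from Liu, Ruan and Zhu and stated without argument, so there is no ``paper proof'' to compare against. Your Sotomayor-style verification is sound and complete in outline, and I checked the key quantities: the reduced Jacobian entries, $\det J=\lambda_1\lambda_2(1+x_0)\frac{2x_c-(x_c+3)x_0}{x_c-x_0}$, the identification of $\det J=0$ with the double root $x_0=2x_c/(x_c+3)$ of the cubic (\ref{x0}) (hence with the discriminant condition (\ref{discriminant})), the trace condition reproducing $\lambda\neq\frac{2(3+x_c)}{3(1+x_c)}$, the identities $1+x_0=\frac{3(x_c+1)}{x_c+3}$ and $\alpha_1y_0^2=3\lambda_1$ at the fold, the transversality $w^{\top}\partial_{\lambda_1}F=\alpha_2x_0(1+x_0)>0$, and the quadratic coefficient, which indeed collapses to $\frac{81\,\alpha_2\lambda_1\lambda_2^2(1+x_c)^3}{x_c(x_c+3)^2}>0$. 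Together with the classical classification of semi-hyperbolic equilibria this yields both the genericity of the fold along $SN$ and the sector structure (two hyperbolic, one parabolic) claimed in the statement. The only cosmetic point worth adding is the observation that $x_0=2x_c/(x_c+3)<x_c$ for all $x_c>0$, so the denominator $x_c-x_0$ appearing in your simplified Jacobian entries never vanishes on $SN$.
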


Using (\ref{discriminant}) we can obtain the explicit parametrization of the saddle--node curve in the plane $\lambda_1$--$\lambda_2$ for given values of $\alpha_1$, $\alpha_2$ and $x_c$. 
$$
\lambda_1 = \frac{1}{3}\left(\frac{4x_c^2 \alpha_1 \alpha_2^2\lambda^2}{(1+x_c)^2} \right)^{1/3},\qquad
\lambda_2 = \frac{1}{3}\left(\frac{4x_c^2 \alpha_1 \alpha_2^2}{\lambda (1+x_c)^2} \right)^{1/3}
$$

Takens-Bogdanov bifurcations are given as follows:

\begin{theorem}[Liu et al]
The parameter set
\begin{equation}
BT=\left\{(\lambda_1,\lambda_2,\alpha_1,\alpha_2,x_c)\mid \psi= \frac{4x_c^2}{27(1+x_c)^2},\, \lambda=\frac{2(3+x_c)}{3(1+x_c)}\right\}
\end{equation}
are non-degenerate Takens-Bogdanov  bifurcations of system (\ref{Delisi}). 
\end{theorem}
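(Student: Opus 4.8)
The plan is to verify directly the hypotheses of the Takens--Bogdanov bifurcation theorem at a point of $BT$ and for the associated equilibrium $(x_0,y_0)$ of (\ref{Delisi}): (a) the Jacobian $J$ is nilpotent and nonzero, so it has a double zero eigenvalue with a single Jordan block; (b) the two quadratic coefficients of the planar normal form $\dot\xi_1=\xi_2$, $\dot\xi_2=a\,\xi_1^2+b\,\xi_1\xi_2+\cdots$ are both nonzero; and (c) two of the five parameters unfold the singularity transversally. I would treat (a)--(c) in that order.

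For (a), I would evaluate
\[
J(x_0,y_0)=\begin{pmatrix}
-\lambda_1(1+2x_0)+\alpha_1\!\left(1-\tfrac{2x_0}{x_c}\right)y_0^2 & 2\alpha_1\!\left(1-\tfrac{x_0}{x_c}\right)x_0y_0\\[4pt]
\lambda_2y_0-\alpha_2 & \lambda_2(1+x_0)
\end{pmatrix}
\]
and eliminate $y_0$ and $\psi$ using (\ref{y0})--(\ref{x0}). A double zero eigenvalue means $\tr{J}=0$ and $\det{J}=0$; a short computation identifies $\det{J}=0$ with $x_0$ being a multiple root of the cubic (\ref{x0}), hence with the discriminant condition (\ref{discriminant}) — that is, with the saddle--node locus of Proposition~1. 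Imposing $\tr{J}=0$ in addition along that curve and simplifying should single out $\lambda=\tfrac{2(3+x_c)}{3(1+x_c)}$, exactly the value removed from $SN$, so that $BT$ is precisely where the saddle--node degenerates. That $J\neq0$ there is immediate: its lower-left entry is $\lambda_2y_0-\alpha_2=-\alpha_2/(1+x_0)\neq0$ by (\ref{y0}), so $J$ has rank one and is nilpotent.

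For (b) and (c), I would pick $v_0\in\ker J$ and $v_1$ with $Jv_1=v_0$, pass to coordinates adapted to $\{v_0,v_1\}$, and read off the system in the form $\dot\xi_1=\xi_2+O(|\xi|^2)$, $\dot\xi_2=a\,\xi_1^2+b\,\xi_1\xi_2+O(|\xi|^3)$ using the standard bilinear formulas; here $a$ and $b$ come out as explicit rational functions of $(x_c,x_0,\lambda_1,\lambda_2,\alpha_1,\alpha_2)$, which on $BT$ (where $x_0$ is the double root of (\ref{x0}) and $\lambda$ is as above) reduce, up to positive factors, to rational functions of $x_c$, and I would show they have no zero for $x_c>0$. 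For transversality, I would take $(\psi,\lambda)$ (equivalently a pair such as $(\lambda_1,x_c)$) as unfolding parameters and check that $(\psi,\lambda)\mapsto(\tr{J},\det{J})$ has nonvanishing Jacobian determinant along $BT$; combined with (b) this yields a non-degenerate Takens--Bogdanov bifurcation.

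I expect step (b) to be the main obstacle: the reduction to normal form must carry the quadratic part of the Taylor expansion of (\ref{Delisi}) at $(x_0,y_0)$ through a non-orthogonal change of basis, and the unsimplified expressions for $a$ and $b$ are bulky; the substance is to simplify them with the two defining relations of $BT$ and then prove nonvanishing in $x_c$ on $(0,\infty)$. The transversality determinant in (c) is milder but still needs the implicit dependence of the double root $x_0$ on $x_c$ to be differentiated. (As the statement is due to Liu, Ruan and Zhu, one could instead simply cite \cite{Dan}; the above is how one would reconstruct the proof.)
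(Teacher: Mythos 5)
The paper offers no proof of this theorem: it is imported verbatim from Liu, Ruan and Zhu \cite{Dan} (note the phrasing ``The following two results sumarizes the results by Liu et al''), so there is no internal argument to compare against. Your outline is the standard verification of a non-degenerate Takens--Bogdanov point --- nilpotent nonzero Jacobian, nonvanishing quadratic normal-form coefficients, transversal two-parameter unfolding --- and is, in structure, exactly what the cited reference does; in particular the rank-one observation via $\lambda_2 y_0-\alpha_2=-\alpha_2/(1+x_0)\neq 0$ and the identification of $\det J=0$ with the double root of the cubic (\ref{x0}) are correct and consistent with the paper's Proposition~1, whose excluded value of $\lambda$ is precisely the trace condition. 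The only caveat is that steps (b) and (c), where essentially all of the work lies (showing the coefficients $a$, $b$ and the transversality determinant are nonzero for \emph{every} $x_c>0$, which is what makes the set $BT$ uniformly non-degenerate and rules out codimension-three points), are promised rather than carried out; as you note, for the purposes of this paper one simply cites \cite{Dan}.
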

For a choice of parameters in $BT$ the critical point undergoing a BT bifurcation is given by (\ref{y0}) and (\ref{x0}). As a previous construction towards proving our main result, we first characterize the Hopf bifurcations locus.

\begin{prop}\label{1}
The parameter set
$$
H=\{(\lambda_1,\lambda_2,\alpha_1,\alpha_2,x_c)\mid \mbox{(\ref{Hopf}) holds}\}
$$
is the Hopf and symmetric saddle bifurcation surface of system (\ref{Delisi}).
\begin{eqnarray}
0 &=&
(1+x_c)^3\psi\lambda^3 - (\psi x_c^3 +(1-\psi)x_c^2 -5\psi x_c - 3\psi)\lambda^2 +\nonumber\\
&& (x_c^2+4x_c+3)\psi\lambda +(1+x_c)^2(1+x_c\psi)\psi
   \label{Hopf}
\end{eqnarray}
\end{prop}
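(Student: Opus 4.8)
\medskip
\noindent\emph{Idea of proof.} The plan is to describe $H$ as the set of parameters for which the Jacobian of (\ref{Delisi}) at a nontrivial equilibrium $(x_0,y_0)$ has vanishing trace, and then to eliminate the equilibrium abscissa $x_0$ against the cubic (\ref{x0}) that cuts out the catastrophe surface (\ref{cubic}). The distinction between a genuine Hopf point and a symmetric (neutral) saddle is then read off from the sign of the determinant of the same Jacobian.

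First I would form the Jacobian $J(x,y)$ of (\ref{Delisi}) and evaluate it at $(x_0,y_0)$. All entries are elementary except $\partial\dot x/\partial x$, which contains the term $\alpha_1(1-2x_0/x_c)y_0^2$; cancelling $x_0\neq0$ in the relation $\dot x=0$ gives $\alpha_1(1-x_0/x_c)y_0^2=\lambda_1(1+x_0)$, and substituting this expresses $\tr{J}$ as a rational function of $x_0$, $x_c$ and the ratio $\lambda=\lambda_2/\lambda_1$ alone. Multiplying by the factor $x_c-x_0$, which never vanishes at an equilibrium with real $y_0$ (there $1-x_0/x_c>0$), the condition $\tr{J}=0$ collapses after a short computation to
\begin{equation*}
\lambda(1+x_0)(x_c-x_0)=x_0(1+x_c),
\end{equation*}
a polynomial $P_1(x_0;\lambda,x_c)=0$ that is quadratic in $x_0$ and affine in $\lambda$.

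Next I would eliminate $x_0$ between $P_1$ and $P_2(x_0;\psi,x_c):=x_0^2(x_c-x_0)-\psi x_c(1+x_0)^3$, the latter being (\ref{x0}) cleared of denominators. The equation defining $H$ is $\Res_{x_0}(P_1,P_2)=0$; since the $5\times5$ Sylvester matrix has three rows built from the coefficients of $P_1$ (each affine in $\lambda$) and two rows from those of $P_2$ (each affine in $\psi$), the resultant is automatically cubic in $\lambda$ with coefficients of degree at most two in $\psi$, and expanding it should reproduce (\ref{Hopf}) up to a nonzero constant. Because $x_0=0$ is never a common root of $P_1$ and $P_2$ and $x_c-x_0$ is the only factor that was cleared, no spurious component is introduced, so (\ref{Hopf}) is exactly the elimination.

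Finally, to separate genuine Hopf bifurcations from symmetric saddles, I would compute $\det J(x_0,y_0)$, again simplified by (\ref{y0}) and (\ref{x0}), and follow its sign along the surface defined by (\ref{Hopf}): on the part where $\det J>0$ the eigenvalues form a purely imaginary pair and, with the routine transversality check, this is a Hopf point; on the part where $\det J<0$ they are real of the form $\pm\mu$, a symmetric (neutral) saddle; the two parts meet along the locus $\det J=0$, which on that surface is precisely the Takens--Bogdanov set $BT$. The main obstacle is the elimination step: one has to carry the resultant out carefully, confirm that it factors cleanly into (\ref{Hopf}) with no extraneous pieces, and keep track of the admissibility constraints $x_0>0$, $x_0<x_c$, $y_0>0$ that single out the biologically meaningful branch.
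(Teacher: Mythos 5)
Your proposal is correct and lands on the same surface (\ref{Hopf}), but the elimination is organized differently from the paper's. The paper keeps $y_0$ as an unknown and runs a chain of three resultants: $R_1=\Res[\tr{A},f,y_0]$ and $R_2=\Res[\tr{A},g,y_0]$ to eliminate $y_0$, followed by $R=\Res[R_1,R_2,x_0]$, discarding trivial factors at the end. You instead eliminate $y_0$ by direct substitution --- using $\alpha_1(1-x_0/x_c)y_0^2=\lambda_1(1+x_0)$ from $f=0$ to reduce $\tr{J}=0$ to the clean quadratic $\lambda(1+x_0)(x_c-x_0)=x_0(1+x_c)$ --- and then take a single resultant against the cubic (\ref{x0}). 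Your computation checks out (e.g.\ at $x_c=2$, $\lambda=1$ the two roots of your quadratic yield exactly the two roots in $\psi$ of (\ref{Hopf})), and your route has two advantages: the single Sylvester determinant of a quadratic against a cubic, with nonvanishing leading coefficients $-\lambda$ and $-(1+\psi x_c)$, vanishes \emph{if and only if} there is a common root $x_0$, whereas the paper's iterated resultants only give necessary conditions and must be cleaned of spurious factors afterwards; and your closing discussion of the sign of $\det J$ along the surface is what actually justifies the phrase ``Hopf and symmetric saddle bifurcation surface'' in the statement, a point the paper's proof leaves implicit. The only caveats, which you already flag, are that (\ref{Hopf}) characterizes $\tr{J}=0$ at \emph{some} complex root of the cubic, so the admissibility constraints $0<x_0<x_c$, $y_0>0$ must still be imposed to select the biologically meaningful branch, and the routine eigenvalue-crossing transversality check is needed to upgrade the $\det J>0$ part to a genuine Hopf bifurcation.
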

\begin{proof}
Let $f$, $g$ denote the right hand sides in (\ref{Delisi}), then we look for a common root of the polynomial equations $f=g=trA=0$, where
 $A=\frac{\partial(f,g)}{\partial(x,y)}$ and $trA=\tr{A}$. We compute
$R_1=\Res[trA,f,y_0]$, 
$R_2=\Res[trA,g,y_0]$ which are polynomials in $x_0$. A necessary condition for $trA=0=f$ to have a common root is that $R_1=0$, and similarly a necessary condition for $trA=g=0$ to have a common root is that $R_2=0$. Then compute $R=\Res[R_1,R_2,x_0]$ which is a polynomial in the parameters. A necessary condition for $R_1=R_2=0$ to have a common root is that $R=0$.  If  we exclude trivial factors, we end up with (\ref{Hopf}).
\end{proof}

Liu et al \cite{Dan} prove that a non--degenerate Takens-Bogdanov bifurcation occurs for any values of the positive parameters, thus excluding the possibility of codimension three degeneracy.
Adam  \cite{Adam} gives sufficient conditions for  system (\ref{Delisi}) to undergo a Hopf bifurcation, although no explicit computation is done. Liu et al describe the Hopf bifurcation locus in terms of parameters involved in the normal form computation, thus not explicit. The expression in Proposition~\ref{1} gives an explicit parametrization of the locus of Hopf bifurcations in the parameters.

\section{Bautin bifurcation}
We now give the main idea to compute the first Lyapunov coefficient for a critical point undergoing a Hopf bifurcation. Let $(x_0,y_0)$ be such a critical point. Then we shift the critical point to the origin $x=x_0+\epsilon x_1$, $y=y_0+\epsilon y_1$ and expand in powers of $\epsilon$ in order to collect the homogenous components of the vector field. We first consider the linear part
\begin{eqnarray}
x_1' &=& ax_1+b x_1,\\
y_1' &=& cx_1+d x_1
\end{eqnarray}
 and perform the linear change of variables $Y_1=b_2 x_1-a_2 y_1$, $Y_2=(a_1 b_2-a_2 b_1)x_1$. Under the hypothesis of complex eigenvalues and  the determinant $a_1b_2-a_2 b_1>0$ the system reduces to an oscillator equation $Y_1'=Y_2$, $Y_2'=\omega^2 Y_1-2\mu Y_1$, with eigenvalues $\lambda=\mu\pm\sqrt{\omega^2-\mu^2}$ and the Hopf condition becomes $\mu=0$, $\omega^2= a_1b_2-a_2b_1$. We compute right and left eigenvectors $q_0$, $p_0$ such that $Aq_0=i\omega q_0$ and $A^{T}p_0=-i\omega p_0$ and $\langle p_0,q_0\rangle=1$. Then $\langle p_0,\bar{q_0}\rangle=0$. Let $Y=z q_0+\bar{z}\bar{q_0}$. Then the whole nonlinear system reduces to (setting $\epsilon=1$)
 $z'=\lambda z+ G_2(z,\bar{z})+G_3(z,\bar{z})+\cdots$
 then we compute $\ell_1$ by the formula given by \cite[p.309--310]{Yuri}.
 
 As shown in the appendix, $\ell_1$ becomes a polynomial in $x_0,y_0,\omega$ and after elimination of $\omega^2$ and $\omega^4$ which are the only powers appearing there, and  of $y_0$ using (\ref{Delisi}), a polynomial in $x_0$ of high degree (19) results. The main difficulty is that computing the abscissa $x_0$ of the critical point amounts to solving a cubic polinomial. Therefore we compute the resultant of $\ell_1$ with the cubic polinomial (\ref{cubic}) and eliminate $x_0$. Taking an appropriate factor of this, we then compute its resultant with the Hopf equation (\ref{Hopf}). There are two factors. One of this leads to the solution for $\lambda=\lambda_2/\lambda_1$,
 $$
 \lambda=\frac{-3+x_c}{3(1+x_c)}
$$
Substituting this value in the Hopf equation (\ref{Hopf}) we solve for $\psi$ in an appropriate factor. We then get the following

\begin{theorem}
The  parameter set
\begin{eqnarray}
Bau&=&\left\{(\lambda_1,\lambda_2,\alpha_1,\alpha_2,x_c)\mid \,\psi=\frac{\sqrt{x_c}\left(
(-27+x_c)\sqrt{x_c}+ (9+x_c)^{3/2}
\right)}{27(1+x_c)^2},\right.\nonumber\\
&& \left.\qquad\lambda=\frac{-3+x_c}{3(1+x_c)}\right\}
\end{eqnarray}
are Bautin points of codimension $2$ of system (\ref{Delisi}). 
\end{theorem}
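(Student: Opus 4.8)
The plan is to verify, at an arbitrary parameter point of $Bau$ with its associated equilibrium $(x_0,y_0)$ given by (\ref{y0})--(\ref{x0}), the four conditions that characterize a non-degenerate Bautin (generalized Hopf) point of codimension two, in the form of \cite[Ch.~8]{Yuri}: (i) the Jacobian $A$ has a pair of eigenvalues $\pm i\omega$ with $\omega>0$; (ii) the first Lyapunov coefficient vanishes, $\ell_1=0$; (iii) the second Lyapunov coefficient does not, $\ell_2\neq 0$; and (iv) the differential of $(\mu,\ell_1)$ with respect to the parameters has rank two at that point, where $\mu$ is the real part of the eigenvalue. Condition (i) is essentially built in: the displayed formulas for $\psi$ and $\lambda$ on $Bau$ were obtained precisely by eliminating $x_0$ (and $y_0$) from the system $f=g=\tr{A}=0$, so these values are automatically compatible with a common zero; what must still be checked is that the pertinent root $x_0$ of the cubic (\ref{cubic}) is positive and that $\det A=\omega^2>0$ there, placing the point on the Hopf — not the symmetric-saddle — component of the surface $H$ of Proposition~\ref{1}.

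For (ii), recall from the outline preceding the statement that, after eliminating $\omega^2$, $\omega^4$ and $y_0$, the first Lyapunov coefficient is a polynomial $P(x_0)$ of degree $19$ with coefficients rational in $\psi,\lambda,x_c$. I would form $\Res[P,C,x_0]$, where $C$ is the cubic defining $\Sigma$ in (\ref{cubic}), to obtain a polynomial in the parameters whose vanishing is necessary for $\ell_1=0$ at a genuine equilibrium; factor it and discard the factors that cannot vanish for positive parameters; take a further resultant with the Hopf equation (\ref{Hopf}) to eliminate one more variable, which isolates the branch $\lambda=\frac{-3+x_c}{3(1+x_c)}$; and finally substitute this back into (\ref{Hopf}) and solve the relevant factor for $\psi$, giving the displayed expression. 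Because resultants may create spurious branches, one then verifies directly that $P(x_0)=0$ at the relevant positive root $x_0$ of $C$ for these parameter values, so that $\ell_1$ genuinely vanishes along $Bau$.

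The crux of the proof — and, I expect, the main obstacle — is (iii). Using the same shift-to-origin and diagonalization that produced the complex normal form $z'=\lambda z+G_2(z,\bar z)+G_3(z,\bar z)+\cdots$, I would expand $f$ and $g$ about $(x_0,y_0)$ up to order five, read off the coefficients $g_{jk}$ of the monomials $z^j\bar z^k$ in $G_2,\dots,G_5$, and substitute them into the standard closed-form expression for the second Lyapunov coefficient (see \cite[Ch.~8]{Yuri}). After eliminating $\omega$ and $y_0$ and then reducing the outcome modulo the cubic $C$ using the $Bau$ relations for $\psi$ and $\lambda$, $\ell_2$ collapses to an explicit algebraic function of the single variable $x_c$ on the admissible range, which, since $\lambda>0$ on $Bau$, requires $x_c>3$. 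It then remains to show that the numerator of this function has no zero on that range — a one-variable sign analysis, e.g.\ via a Sturm sequence or interval arithmetic — so that $\ell_2\neq 0$ and the degeneracy is of order exactly two. The size of the fifth-order expansion and the reduction against a cubic whose roots are not rational are what make this step delicate; the symbolic computations are relegated to Appendix~A.

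It remains to establish the codimension-two genericity (iv). The differential of $(\mu,\ell_1)$ with respect to the parameters is automatically of rank at most one in directions tangent to $Bau$ (where $\mu\equiv\ell_1\equiv 0$), so condition (iv) reduces to showing that $\nabla\mu$ and $\nabla\ell_1$ are linearly independent at each point of $Bau$; since $\nabla\mu\neq 0$ there ($H$ being a smooth hypersurface with transverse eigenvalue crossing), this is the same as the restriction $\ell_1|_{H}$ having non-vanishing gradient along $Bau$, i.e.\ the curves $H$ and $\{\ell_1=0\}$ meeting transversally. This is a finite computation carried out from the explicit parametrization of $H$ in (\ref{Hopf}), the expression for $\ell_1$ as a function of the parameters, and the parametrization of $Bau$ in the statement. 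Combined with $\ell_2\neq 0$ from (iii), it provides exactly the non-degeneracy required by \cite[Ch.~8]{Yuri}, so that system (\ref{Delisi}) undergoes a Bautin bifurcation of codimension two along $Bau$, completing the proof.
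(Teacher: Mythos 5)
Your derivation of the locus $Bau$ --- eliminating $y_0$ and the powers of $\omega$ from $\ell_1$, taking the resultant of the resulting degree-$19$ polynomial in $x_0$ with the cubic (\ref{cubic}), then a further resultant with the Hopf equation (\ref{Hopf}) to isolate $\lambda=\frac{x_c-3}{3(1+x_c)}$, and finally solving a factor of (\ref{Hopf}) for $\psi$ --- is exactly the computation the paper carries out in Section~3 and Appendix~A, down to the warning that resultants introduce spurious factors and that one must check the relevant positive root of the cubic a posteriori; your point that one must also verify $\det A>0$ to land on the Hopf rather than the symmetric-saddle branch of the surface in Proposition~\ref{1} is likewise implicit in the paper. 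Where you genuinely go beyond the paper is in items (iii) and (iv): the paper's argument stops once $\ell_1=0$ has been imposed on the Hopf surface, and it nowhere computes the second Lyapunov coefficient $\ell_2$ nor checks the regularity of the parameter map $(\mu,\ell_1)$, even though both are required by the normal-form theorem of \cite[Ch.~8]{Yuri} for $Bau$ to consist of \emph{non-degenerate codimension-two} Bautin points rather than merely of candidates where the eigenvalues are purely imaginary and $\ell_1$ vanishes (the paper only confirms the Bautin/GH point numerically via MatCont). Your plan for these extra steps is feasible: the vector field is polynomial of degree four, so the expansion needed for $\ell_2$ terminates, and on $Bau$ both $\psi$ and $\lambda$ are explicit functions of $x_c$, reducing $\ell_2\neq 0$ and the transversality of $H$ and $\{\ell_1=0\}$ to one-variable sign analyses on $x_c>3$ (the range forced by $\lambda>0$). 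In short, your proposal is the paper's computation plus the missing genericity verification; the cost is the heavy fifth-order symbolic work, and the payoff is that the ``codimension $2$'' claim in the statement is actually proved rather than asserted.
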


\subsection{Bifurcation diagram around a point of Bautin}
The local bifurcation diagram around a Bautin point is shown in Figure \ref{Bautin-fig} (see \cite[p.313]{Yuri})
\begin{figure}
    \centering
    \includegraphics[scale=0.3]{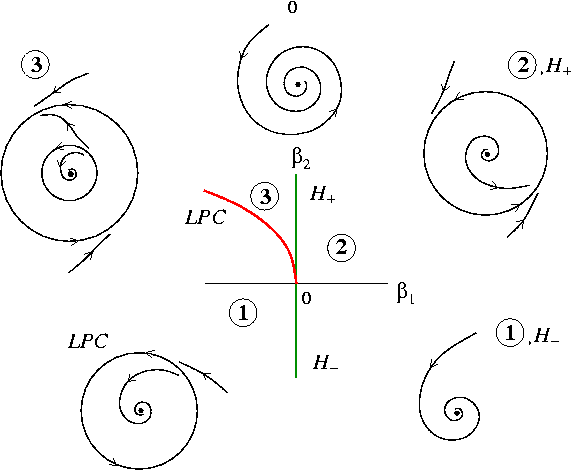}
    \caption{Local diagram of Bautin bifurcacion}
    \label{Bautin-fig}
\end{figure}
There are two components of the Hopf curve $H_{\pm}$ correspondig to the sign of the first Lyapunov coefficient $\ell_0$. Thus when crossing the component $H_{-}$ from positive values of $\beta_1$ a stable limit cycle appears, and smilarly, when crossing the component $H_{+}$, an unstable limit cycle appears. Therefore in the cusp region 3, there coexist two limit cycles the exterior on being stable, the interior unstable, and both collapse along the LPC curve.

\section{Global dynamics}
Figure \ref{schema}  shows schematically the bifurcation diagram  as computed numerically with MatCont in Figure~\ref{fig:diagram}. There are shown three lines of fixed value of $\lambda_2$ varying $\lambda_1$. We will now describe the qualitative phase portrait along these lines. For the upper line $CT$ corresponding to a value of $\lambda_2$ just below the Takens--Bogdanov point $BT$, the dynamics can be described as follows: In passing from a point $C$ to a point $D$ the trivial critical point connects to the saddle point along a hetheroclinic orbit. This happens at the point marked as $K$. Indeed a curve of heteroclinic connections is depicted along the points $KK'K''$ although we have not computed it numerically.  The transition from $C$ to $D$ passing through the heteroclinic connection $K$,  and further evolution to a limit cycle bifurcating from a homoclinic connection at $P$, and disappearance of the limit cycle through a transcritical Hopf bifurcation  ending at $T$,  is shown in Figure~\ref{LineCT}. For completeness we have included the flow at infinity as described in Appendix~\ref{Blow-up}. The critical points at infinity $y=\infty$ are shown as blue points. Notice the hyperbolic sector for $x=0$ and the attractor at $x=x_c$.

\begin{figure}
\centering
\includegraphics[scale=0.75]{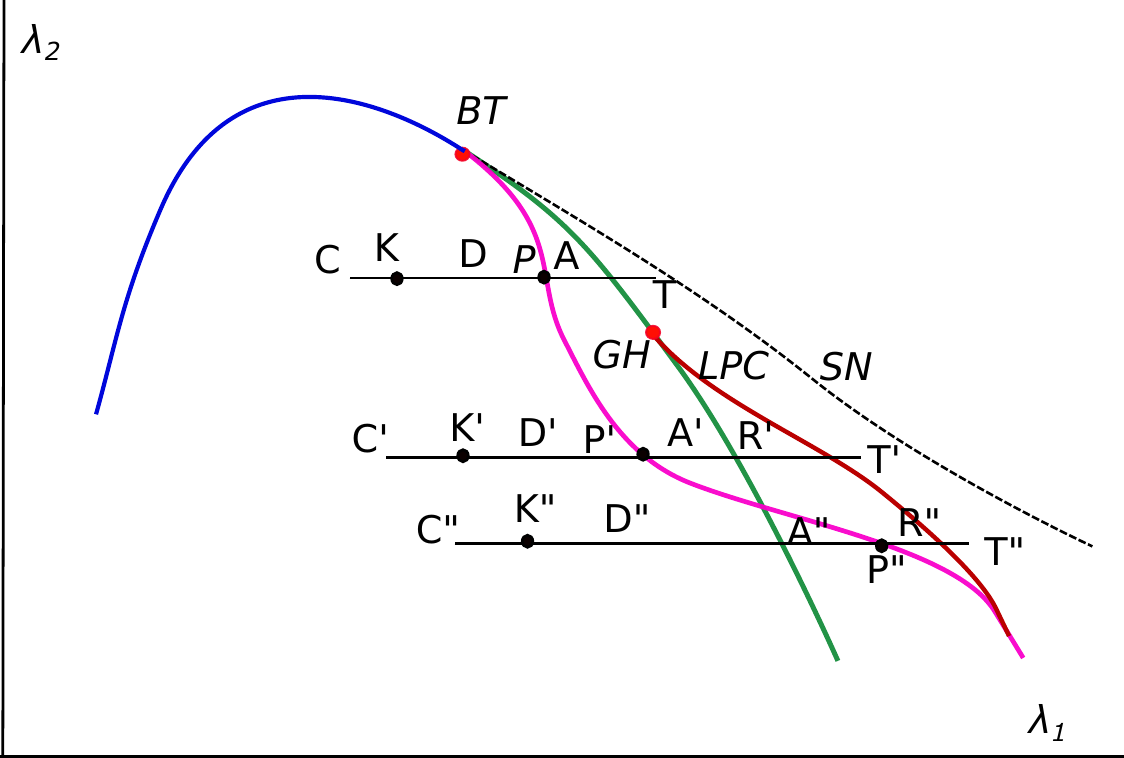}
\caption{Schema of the bifurcation diagram \ref{Bautin-fig}.\label{schema}}
\end{figure}

\begin{figure}\centering
\includegraphics[scale=0.35]{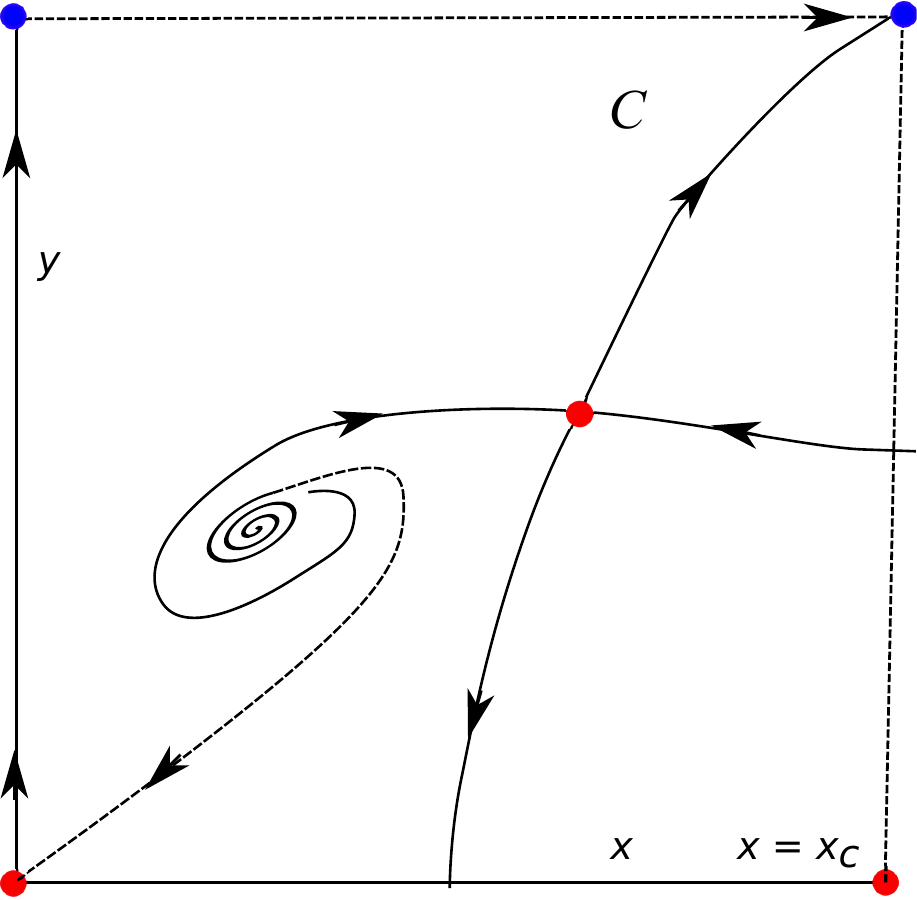}
\includegraphics[scale=0.35]{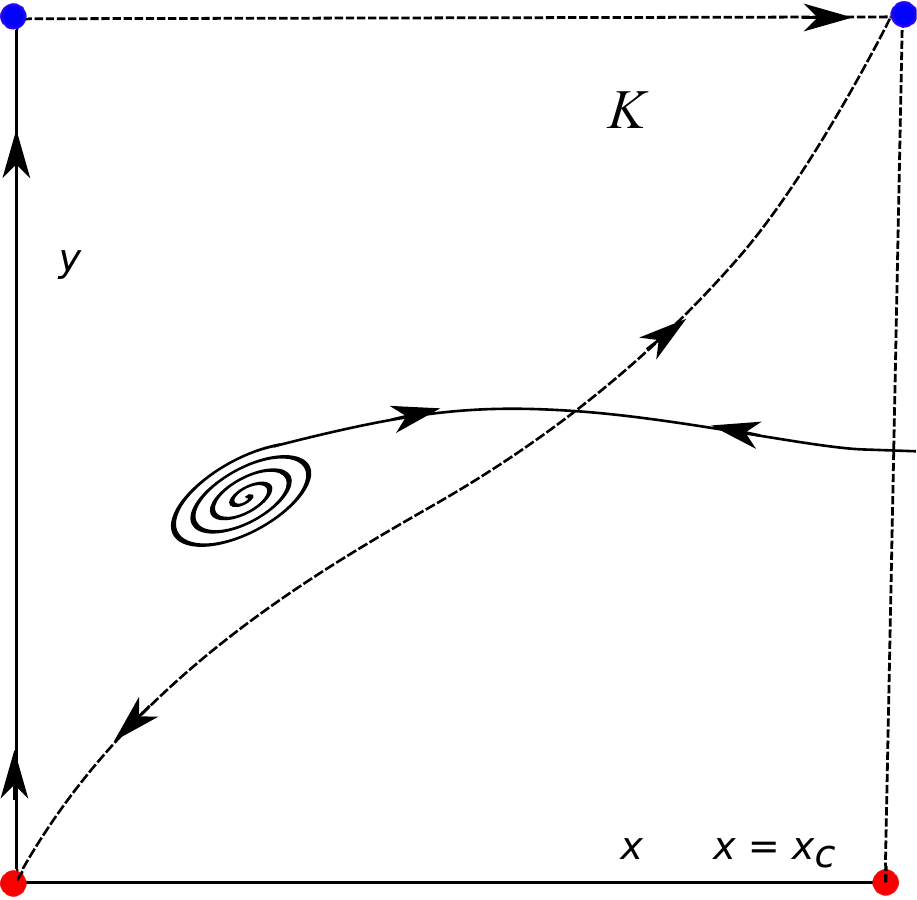}
\includegraphics[scale=0.35]{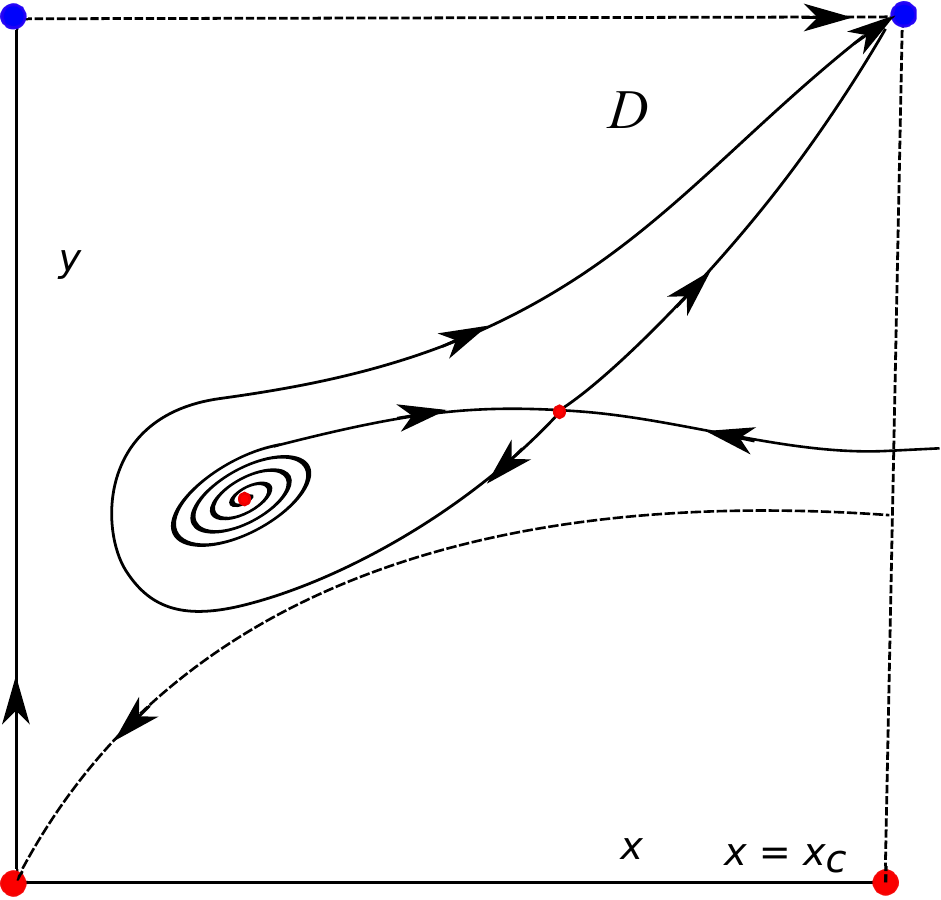}\\
\includegraphics[scale=0.35]{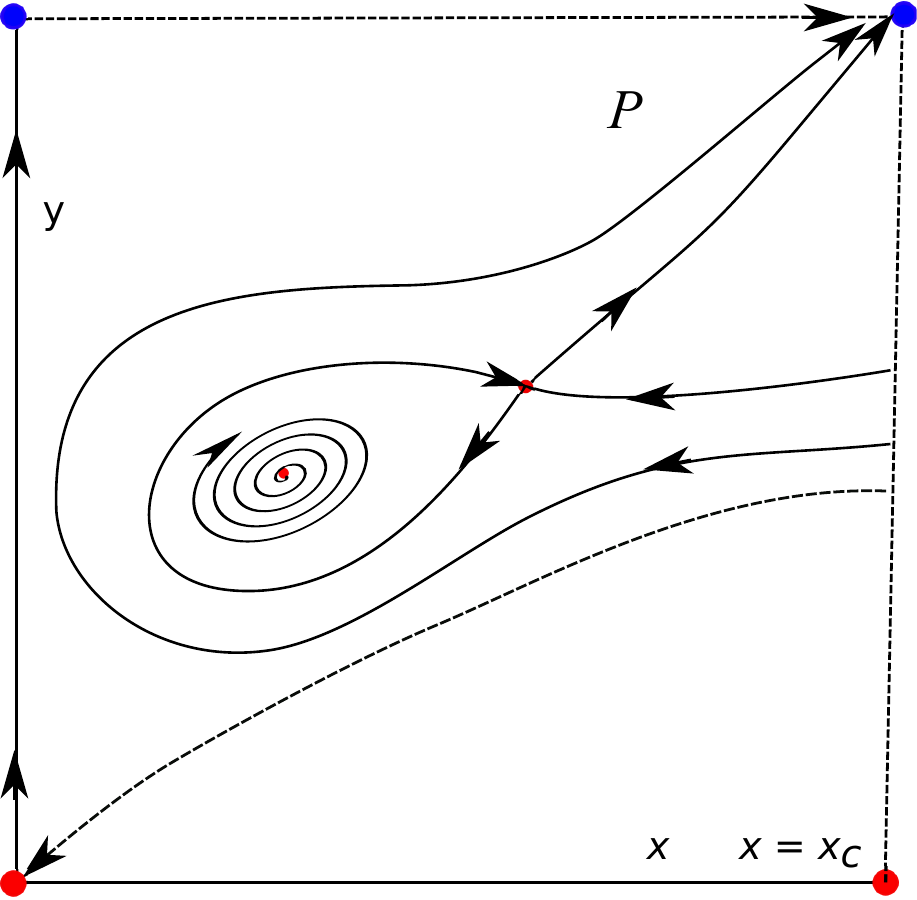}
\includegraphics[scale=0.35]{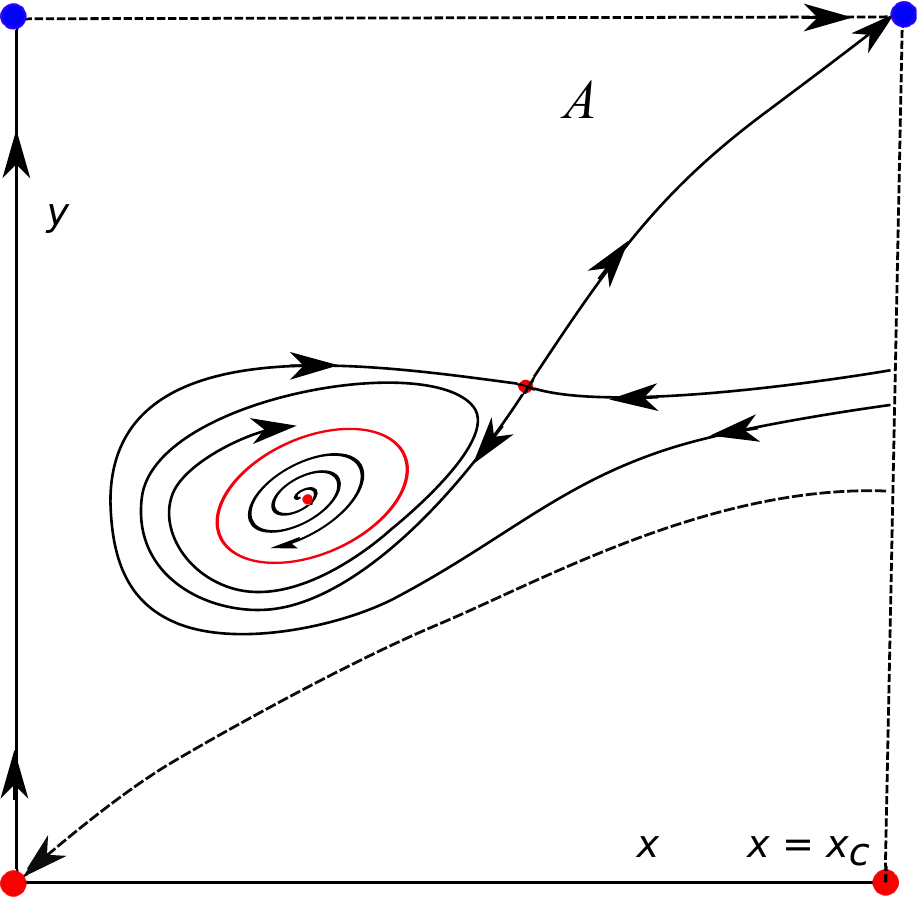}
\includegraphics[scale=0.35]{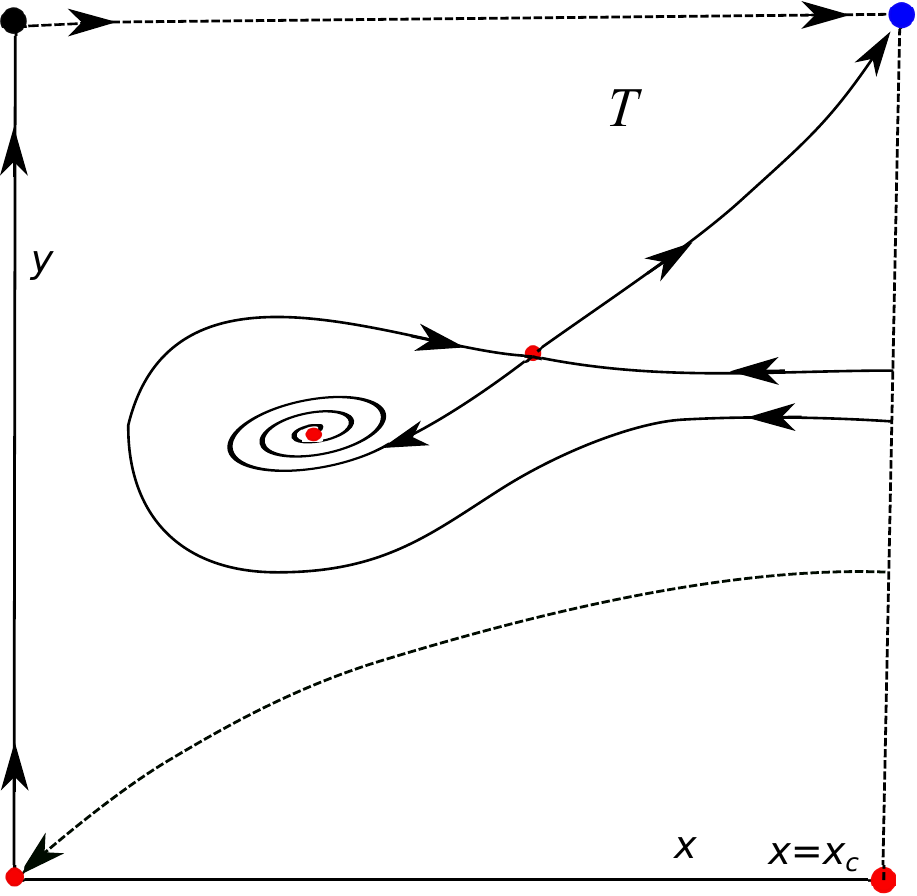}
\caption{Qualitative phase portrait  along the line $CKDPAT$ of the bifurcation scheme in Figure \ref{schema}.\label{LineCT}}
\end{figure}

Similarly, the evolution of the phase portrait along the line $C'T'$ is described in Figure~\ref{LineCprimeTprime}. The evolution along the part $C'K'D'P'A'$ is the same as $CKDPA$ in Figure~\ref{LineCT}, the difference is at the further development of an unstable limit cycle inside the stable limit cycle previously created by a homoclinic bifurcation at $P'=P$, as shown in figure $R'$ and further desappearence of both limit cycle as in $T'$ through a limit point of cycles.

\begin{figure}\centering
\includegraphics[scale=0.35]{C}
\includegraphics[scale=0.35]{K}
\includegraphics[scale=0.35]{D}
\includegraphics[scale=0.35]{P}\\
\includegraphics[scale=0.35]{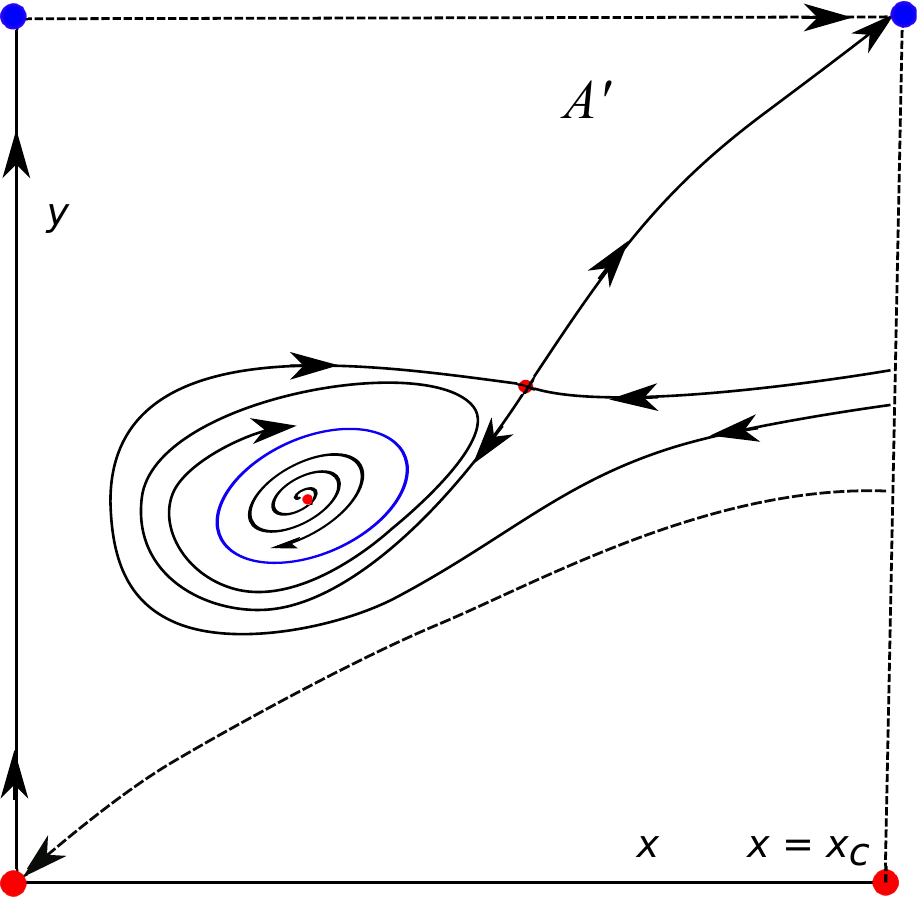}
\includegraphics[scale=0.35]{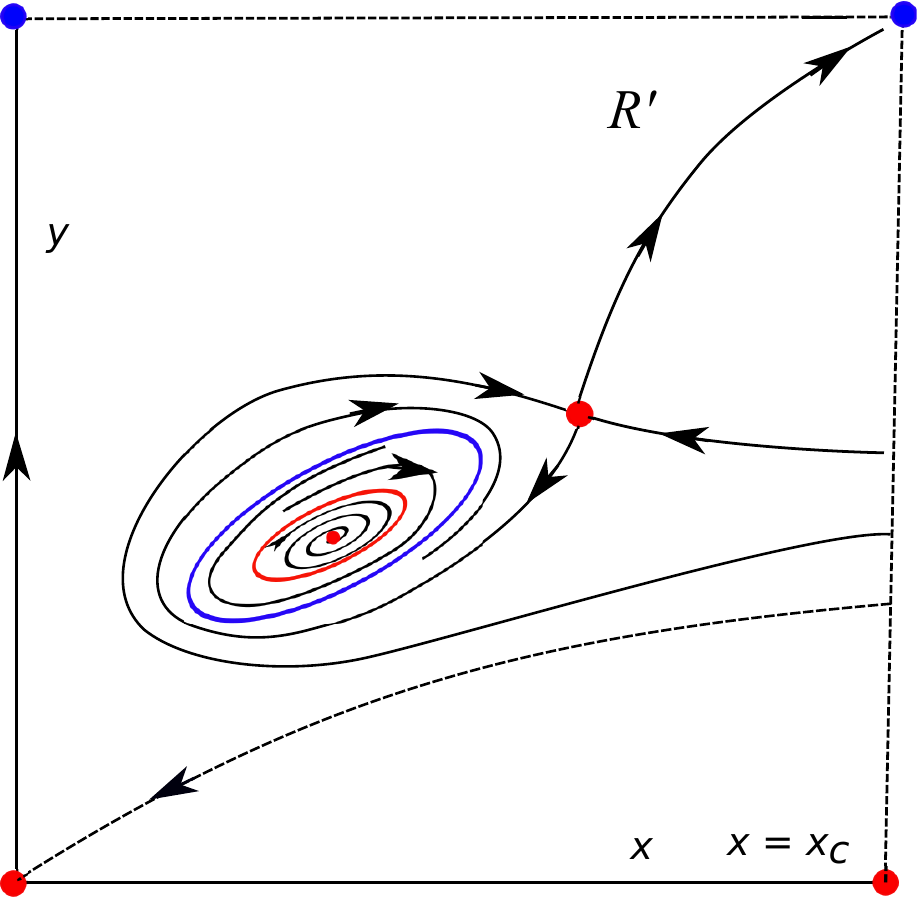}
\includegraphics[scale=0.35]{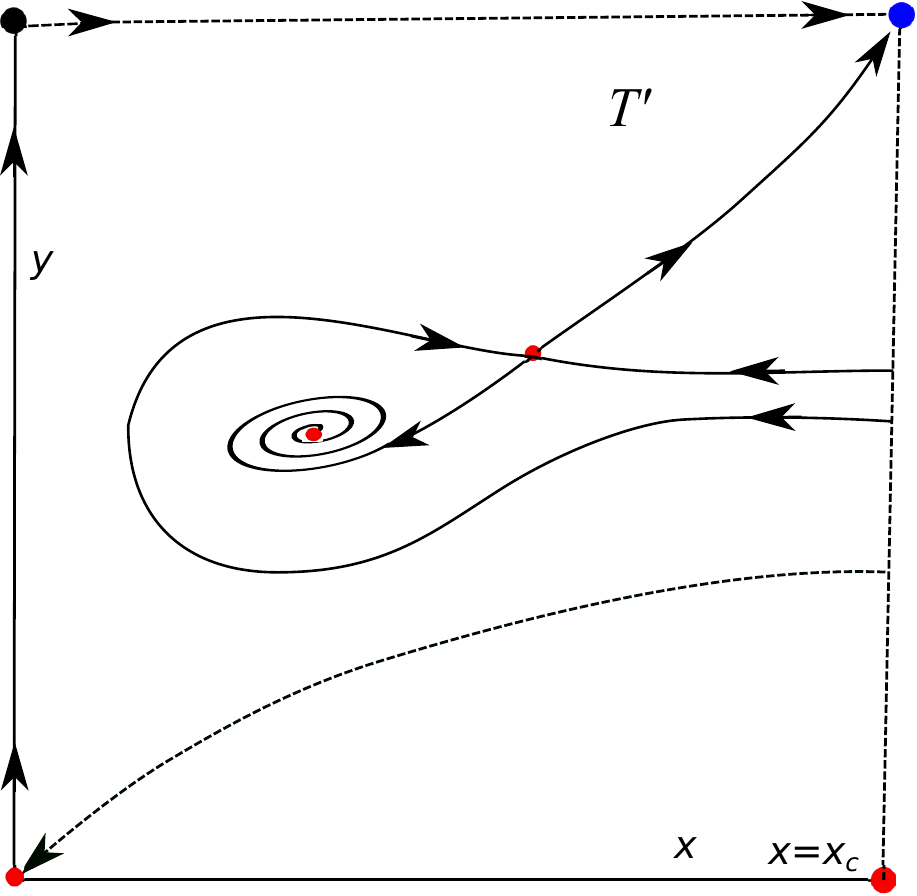}
\caption{Qualitative phase portrait  along the line $C'K'D'P'A'T'$ of the bifurcation scheme in Figure \ref{schema}. The phase portrait along the segment $C'K'D'P'A'$ is the same as $CKPDA$. \label{LineCprimeTprime}}
\end{figure}

Finally the evolution along the line $C''T''$ is described as follows: The phase portrait along  $C''K''D''$ is the same as in $C'K'D'$. Differently from the previous case,  after $D''$ a Hopf bifurcation occurs and an unstable limit cycle is appear as in $A''$ and then a second stable limit cycle originating in an homoclinic bifurcation leading to coexistence of two limit cycles as in case~$R'$.  The whole evolution along the line $C''T''$ is shown in Figure~\ref{LineCbiprimeTbiprime} where only the phase portraits different from the previous case are denoted as $A''$ and $P''$,

Figure~\ref{Coexistence}-(a), (b) shows in detail the evolution along $C'T'$ in the triangular region of coexistence of two limit cycles, with $\lambda_1$ as the $z$-axis. Notice that along increasing values of $\lambda_1$, first a limit cycle bifurcates from a homoclinic an then the second cycle bifurcates from a Hopf point. Figure ~\ref{Coexistence}-(c), (d)  evolution along $C''T''.$

\begin{figure}\centering
\includegraphics[scale=0.35]{C}
\includegraphics[scale=0.35]{K}
\includegraphics[scale=0.35]{D}
\includegraphics[scale=0.35]{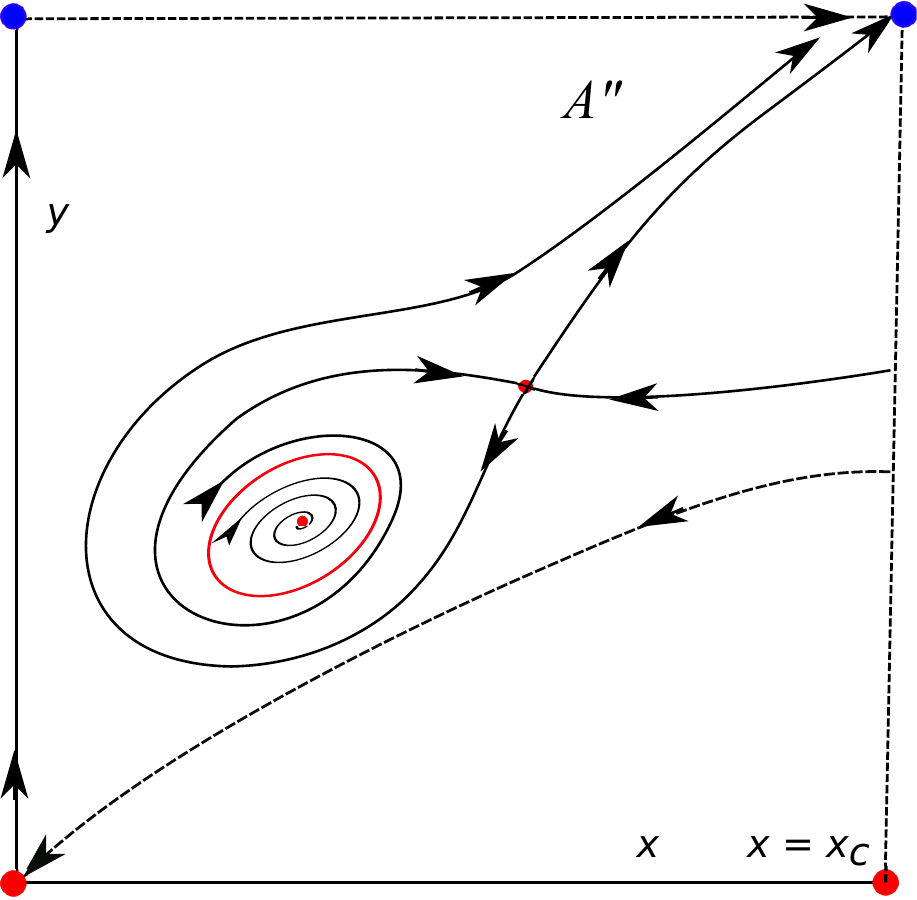}\\
\includegraphics[scale=0.35]{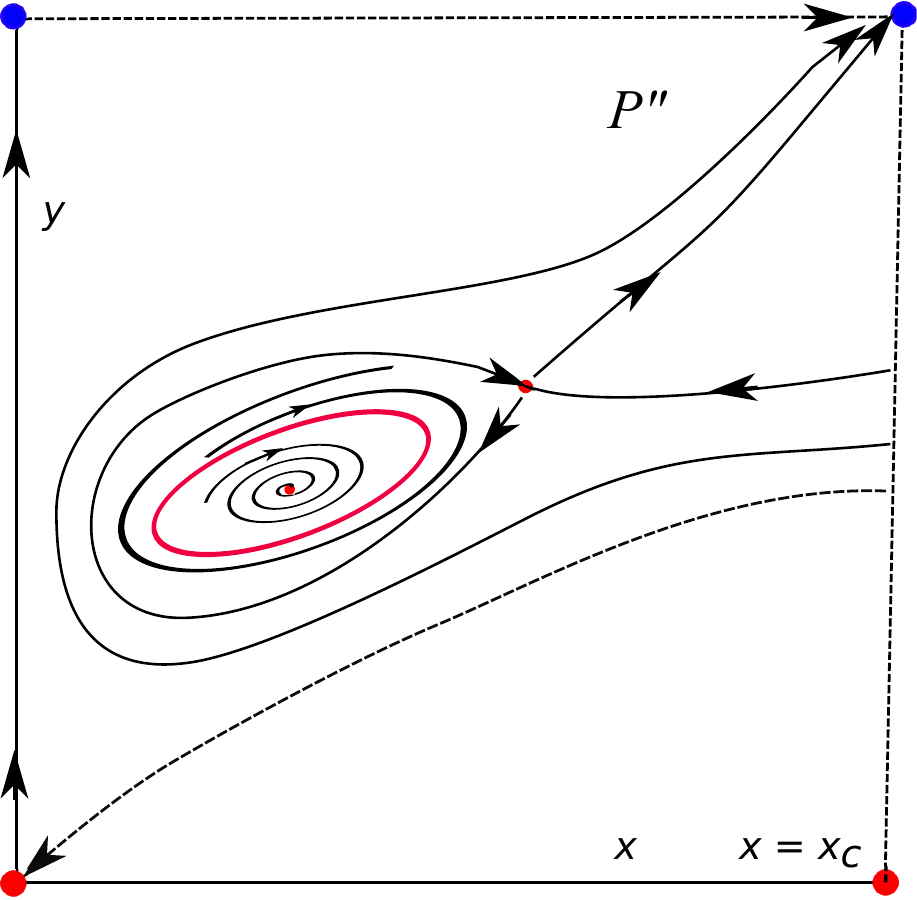}
\includegraphics[scale=0.35]{Rp}
\includegraphics[scale=0.35]{Tp}
\caption{Qualitative phase portrait  along the line $C''K''D''P''A''T''$ of the bifurcation scheme in Figure \ref{schema}.\label{LineCbiprimeTbiprime}}
\end{figure}

\begin{figure}
    \centering
    \subfloat[Numerical continuation of limit cycles in coordinates $(x,y,\lambda_1)$ along the line $C'T'$ within the region of coexistence of two limit cycles.]{{
    \includegraphics[scale=0.3]{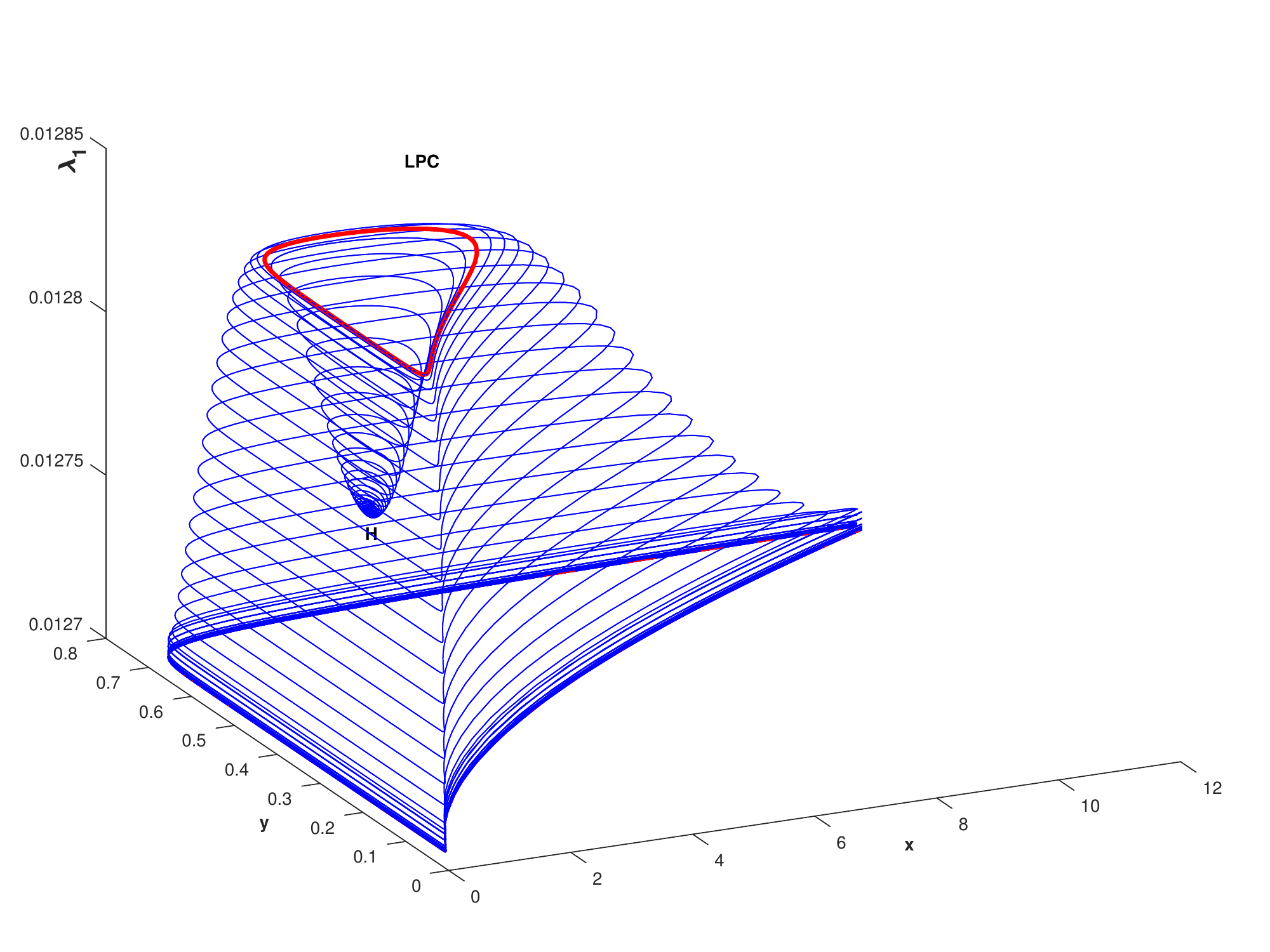} 
    }}
    \qquad
    \subfloat[Numerical continuation of limit cycles in coordinates $(x,y,\lambda_1)$ along the line $C''T''$ within the region of coexistence of two limit cycles.]{{
    \includegraphics[scale=0.3]{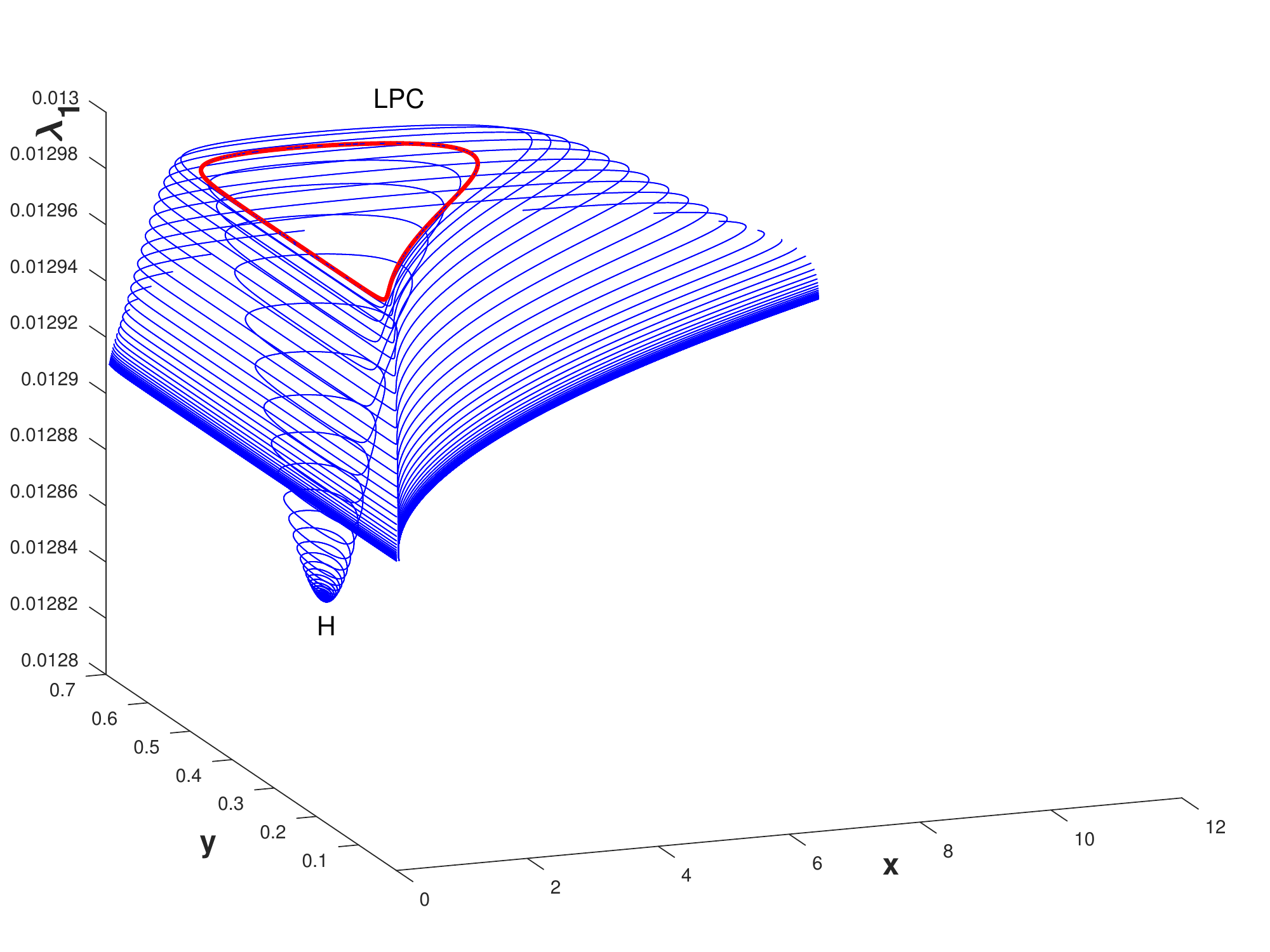} 
    }}\\
    
    \subfloat[Parameter $\lambda_1$ versus period $T$ of the cycles.]{{
    \includegraphics[scale=0.75]{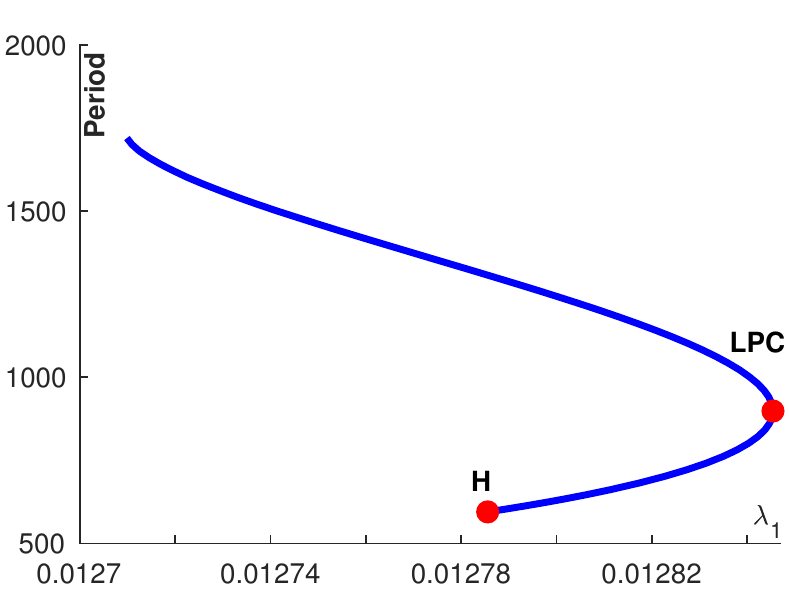} 
    }}
    \quad
    \subfloat[Parameter $\lambda_1$ versus period $T$ of the cycles.]{{
    \includegraphics[scale=0.75]{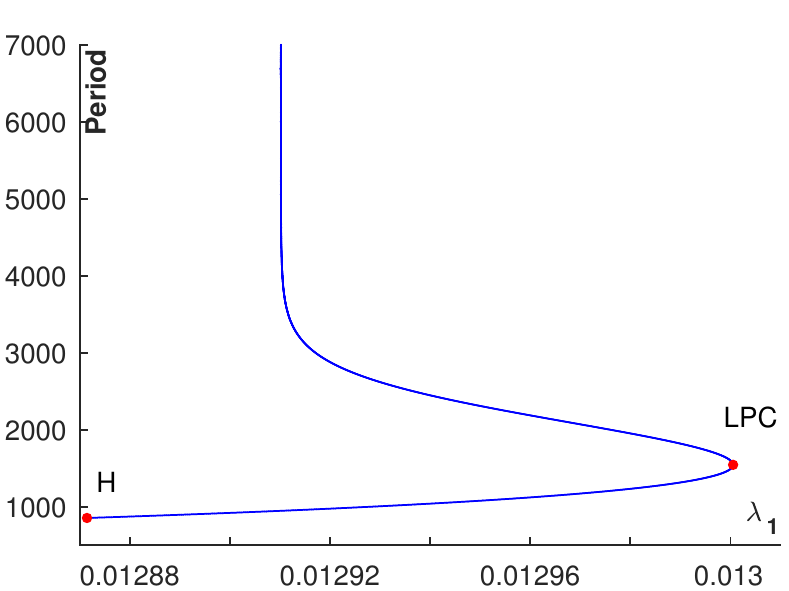} 
    }}\\
    
    \subfloat[Two limit cycles for the same $\lambda_1$ parameter value.]{{
    \includegraphics[scale=0.15]{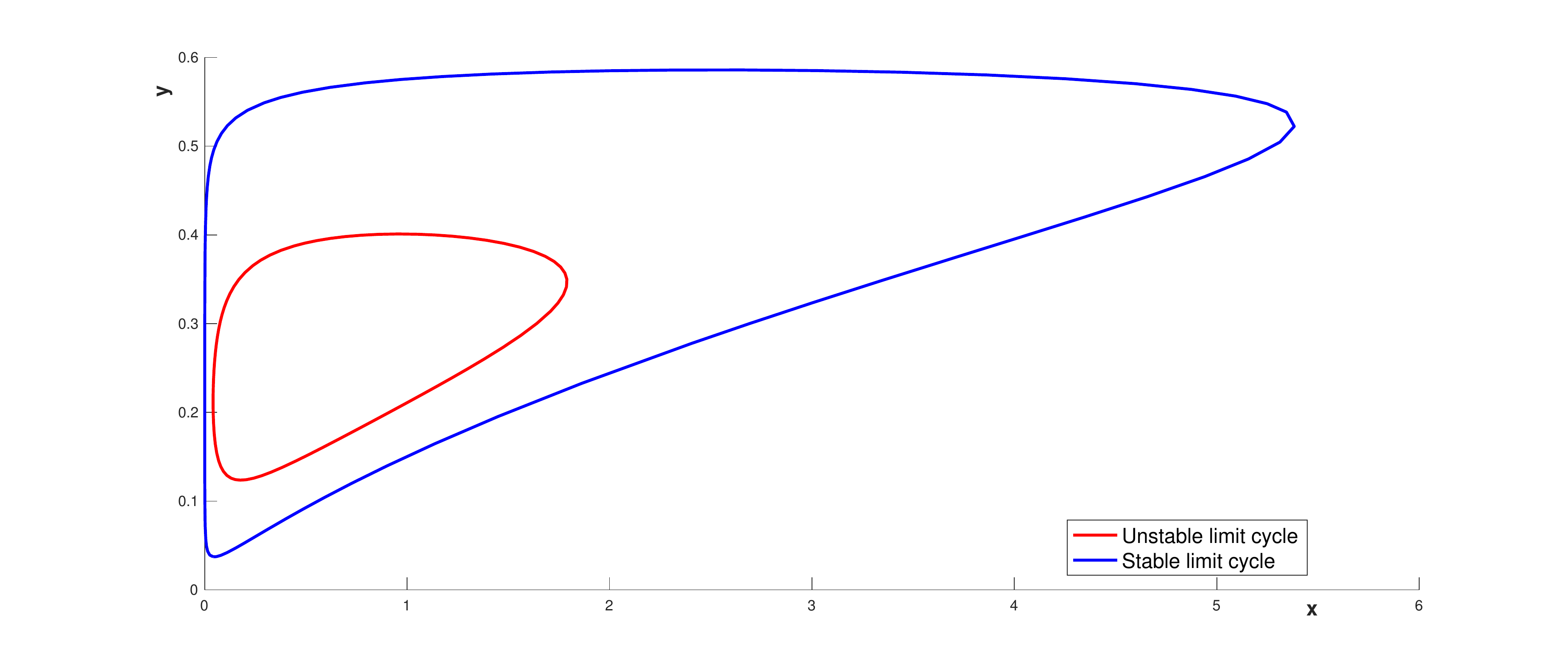} 
    }}
    \quad
    \subfloat[Two limit cycles for the same $\lambda_1$ parameter value.]{{
    \includegraphics[scale=0.25]{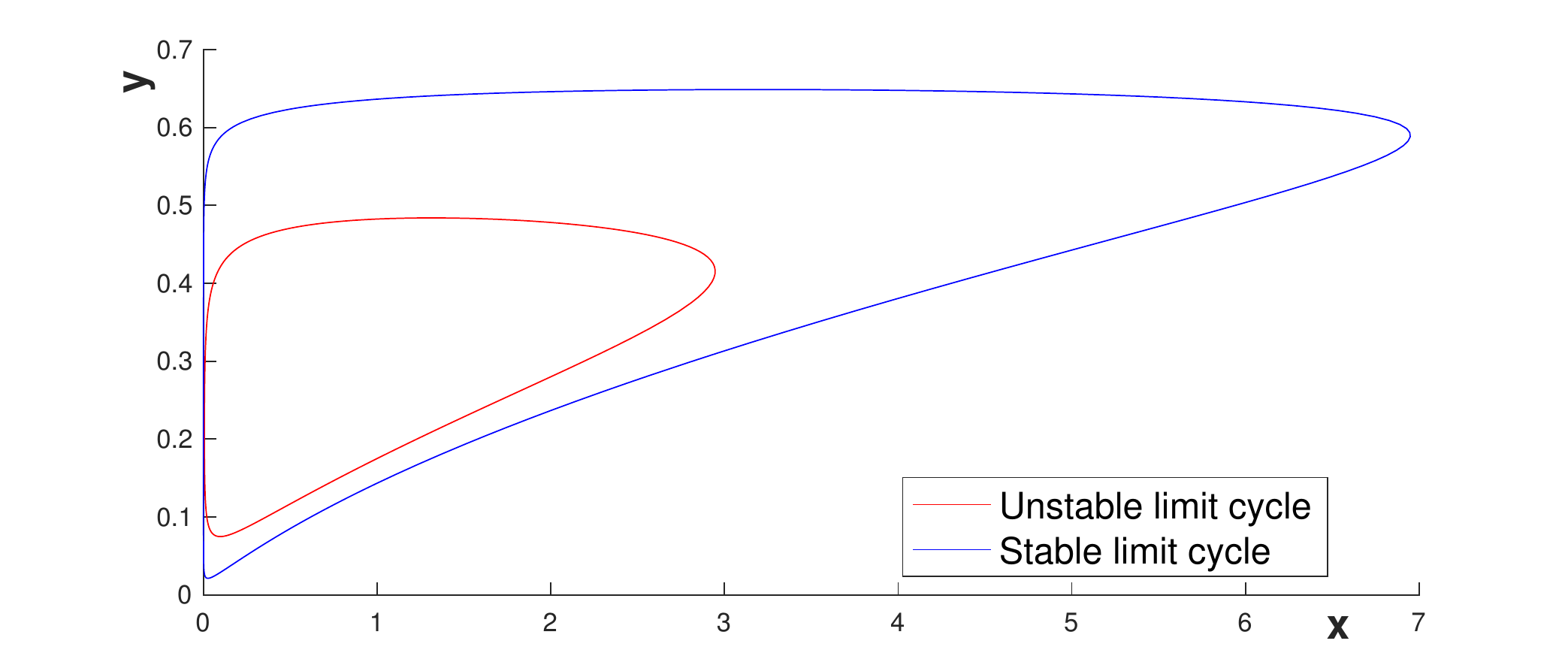} 
    }}
    
    \caption{Coexistence of two limit cycles along the line $C'T'$:  (a), (c)  and (b).   Along the line $C''T''$: (b), (d)  and (f).\label{TwoLC1}}
    \label{Coexistence}
\end{figure}

\begin{figure}
    \centering
    \includegraphics[scale=.5]{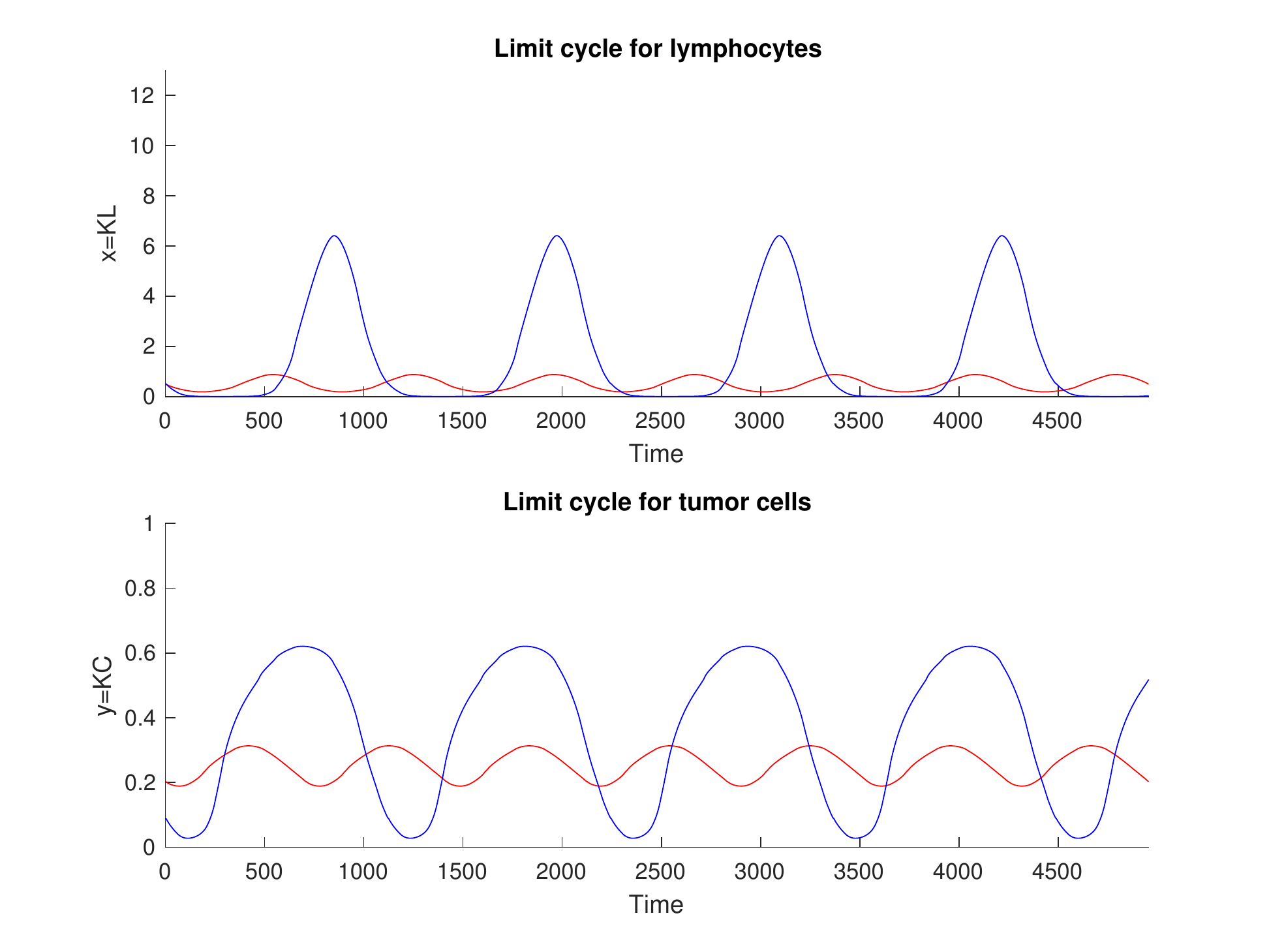}
    \caption{Graphs of coexisting limit cycles of Figure \ref{TwoLC1}. Stable in blue, unstable in red. }
    \label{TwoLC1-bis}
\end{figure}

\section{Implications of the model on the equilibrium phase of immunoediting}
In what follows we will be interested on non-negative values of the parameters and
$x$ within the range $0<x<x_c$. Since $x'<0$ if $x=x_c$ and $y'<0$ if $y=0$, it follows
that the region $0<x<x_c$, $0<y$ is invariant. This delimites the region of real interest (ROI) 
in the model.

\begin{prop}[Elimination threshold]
Given $\alpha_1$, $\alpha_2$, $\lambda_2$, $x_c$, there exists $\lambda_1^*$ such that
if $\lambda_1 >\lambda_1^*$, there exists a curve $y= h(x)$ such that for any initial condition
$(x_0,y_0)$ such that $y_0<h(x_0)$ then there exists $T>0$ such that $y(T)=0$.
\end{prop}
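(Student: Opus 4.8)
The plan is to show that once $\lambda_1$ exceeds the saddle--node threshold no stationary coexistence state survives in the region of real interest, so that an orbit can only escape to $y=+\infty$ or reach extinction $y=0$, the two alternatives being separated by the stable manifold of the origin; I will take $h$ to be that separatrix.

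First I would reduce the statement to an absence--of--equilibria fact. By (\ref{y0})--(\ref{x0}) the abscissa of a critical point of (\ref{Delisi}) with $0<x_0<x_c$ is a root in $(0,x_c)$ of $\psi=F(x_0)$, where $F(x):=x^2(1-x/x_c)/(1+x)^3$. On $(0,x_c)$ the function $F$ is positive and vanishes at both endpoints, and its maximal value equals $4x_c^2/(27(1+x_c)^2)$, the value of $\psi$ at which the cubic in (\ref{x0}) acquires a double root, i.e.\ the discriminant relation (\ref{discriminant}). Hence, putting
\[
\lambda_1^{*}:=\frac{\alpha_1\alpha_2^{2}}{\lambda_2^{2}}\cdot\frac{4x_c^{2}}{27(1+x_c)^{2}},
\]
for $\lambda_1>\lambda_1^{*}$ equation (\ref{x0}) has no root in $(0,x_c)$; in fact then every critical point of (\ref{Delisi}) other than $(0,0)$ has $x_0<0$, so the only critical point in the region $R:=\{0\le x\le x_c,\ y\ge 0\}$ is the trivial saddle $(0,0)$.

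Next I would analyse that saddle. Linearizing (\ref{Delisi}) at $(0,0)$ yields eigenvalues $-\lambda_1<0<\lambda_2$, the unstable eigendirection lying along the invariant axis $\{x=0\}$ (where $y'=\lambda_2 y>0$, so $W^{u}(0,0)$ runs off to $y=+\infty$) and the stable eigendirection being $(\lambda_1+\lambda_2,\alpha_2)$, which points into the open first quadrant; thus one branch $\Gamma$ of $W^{s}(0,0)$ enters $R$. In $R$ consider the nullclines $y=y_y(x):=\alpha_2 x/(\lambda_2(1+x))$, where $y'=0$, and $y=y_x(x):=\bigl(\lambda_1(1+x)/(\alpha_1(1-x/x_c))\bigr)^{1/2}$, where $x'=0$; since there is no equilibrium in $R$ these curves are disjoint, and since $y_y(0)=0<y_x(0)$ we get $0<y_y<y_x$ on $(0,x_c)$, so in the region $\{0<y<y_y(x)\}$ one has both $x'<0$ and $y'<0$. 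A computation of $y''$ on $\{y=y_y(x)\}$ shows that the flow crosses this curve only from the region $\{y'<0\}$ into $\{y'>0\}$; since near $(0,0)$ the branch $\Gamma$ lies below $y_y$ (its slope $\alpha_2/(\lambda_1+\lambda_2)$ there being smaller than the slope $\alpha_2/\lambda_2$ of $y_y$), it cannot leave $\{0<y<y_y(x)\}$ as it is traced backward, so along $\Gamma$ both $x'$ and $y'$ keep a constant sign. Hence $\Gamma$ is the graph of an increasing function $h:(0,x_c)\to(0,\infty)$ with $h(0^{+})=0$ and $0<h(x)<y_y(x)$, reaching the edge $\{x=x_c\}$; this is the curve asserted in the statement.

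Finally I would conclude with a Poincaré--Bendixson argument. Set $\Omega:=\{(x,y):0<x<x_c,\ 0<y<h(x)\}$. Its boundary consists of the invariant arc $\Gamma$ (the graph of $h$), the invariant piece of $\{x=0\}$, the piece of $\{x=x_c\}$ on which $x'=-\lambda_1 x_c(1+x_c)<0$ points inward, and a segment of $\{y=0\}$; so a forward orbit issued from a point of $\Omega$ remains in the compact set $\overline{\Omega}$ until it meets $\{y=0\}$. If it never did, its $\omega$--limit set would be a nonempty compact connected invariant subset of $\overline{\Omega}$, hence by Poincaré--Bendixson a periodic orbit, an equilibrium, or a graphic (a cycle of equilibria joined by connecting orbits). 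A periodic orbit is excluded: it would lie in the convex half-plane $\{x>0\}$, and so would the bounded region it encloses, which would then have to contain an equilibrium, whereas (\ref{x0}) shows there is none with $x>0$. The only equilibrium in $\overline{\Omega}$ is $(0,0)$, so a graphic there would be a homoclinic loop, impossible because $W^{u}(0,0)$ escapes to infinity. Hence the $\omega$--limit set is $\{(0,0)\}$, i.e.\ the orbit lies on $\Gamma$, which is disjoint from $\Omega$ --- a contradiction. Therefore for every $(x_0,y_0)$ with $0<x_0<x_c$ and $0<y_0<h(x_0)$ there is $T>0$ with $y(T)=0$. The delicate point is the middle step: verifying that the incoming branch of $W^{s}(0,0)$ is indeed a graph over $x$ and pinning down the precise shape of $\Omega$ (equivalently, the crossing directions along the two nullclines); granted that, the first and last steps are routine.
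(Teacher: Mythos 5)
Your proof is correct and follows essentially the same route as the paper's: the threshold curve $h$ is the first--quadrant branch of the stable manifold of the saddle at the origin, continued backwards until it meets $x=x_c$, and orbits below it are forced out through $y=0$. You tighten the argument in places where the paper is terse --- you identify the sharp threshold $\lambda_1^*$ as the saddle--node value (the paper only asks for $\lambda_1$ ``large enough'', implicitly large relative to the box height $k$ so that $x'<0$ throughout), you derive the graph property from the nullcline ordering $y_y<y_x$ and backward invariance of $\{y<y_y\}$ instead, and you make explicit the Poincar\'e--Bendixson step the paper summarizes as ``the result follows''.
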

\begin{proof}
Fix  $\alpha_1$, $\alpha_2$, $\lambda_2$ and $x_c$. Since the saddle--node curve is the hyperbola
$\lambda_1\lambda_2^2=const$ (see Proposition 1), then for $\lambda_1$ large enough the unique critical point is the origin
and is a saddle with the positive $y$ axis as a branch of the unstable manifold. Let us consider the rectangular region within the ROI
$$
R=\{(x,y)\mid 0<x<x_c,\quad 0<y<k\}
$$
We have seen that on the boundary $x=x_c$, $x'<0$; on the boundary
$y=0$, $y'<0$. On the upper boundary  $y=k$.
Since $x$ remains bounded, it follows that
$y'=\lambda_2y(1+x)-\alpha_2x$ is positive for   $y=k$  large enough. We now
follow the unstable manifold $W^s(0,0)$ backwards in time. 
A strightforward computation of the stable eigenvalue shows that
a small components of $W^s(0,0)$ belongs to $R$, since there are no
critical points within $R$ it follows that it must intersect the line
$x=x_c$. It remains to show that in fact the component of $W^s(0,0)$
within the region $0<x<x_c$ can be expressed as the graph of
a function $y=h(x)$. Now from the first equation 
$x'=-\lambda_1x(1+x)+\alpha_1 x(1-x/x_c)y^2$, since $x$, $y$ remain bounded and
$\lambda_1$ is large enough, it follows that $x'<0$, and the result
follows.

\end{proof}

The above theorem defines a threshold value of the population of cancer cells $y_c$
given by the intersection of $W^s(0,0)$ and the line $x=x_c$, namely $y_c=h(x_c)$: let $y_0$ be an initial population of cancer cells $y_0<y_c$, for a given growth parameter $\lambda_2$ and interaction constants $\alpha_{1,2}$ then there exists $x_0=h^{-1}(y_0)$ such that for $x_0'>x_0$ the evolution of cancer cells $y(t)$ with initial condition $(x_0',y_0)$  becomes zero. Geometrically, the horizontal line $y=y_0$ in phase space intersects the graph of the curve $y=h(x)$ at a a point $(\bar{x}_0,\bar{y}_0)$ and for an initial population of lymphocites large engouh $x_0<x_0'<x_c$, the solution with inital condition $(x_0',y_0)$ crosses the line $y=0$ for some finite time $T$ and $y(T)=0$. See Figure~\ref{Threshold}

 Notice  that the above dynamics occurrs in the scaled variables $(x,y)$, The branch of the stable manifold $y=h(x)$ transforms back to the original variables $(x,\bar{y})$ into a curve $\bar{y}= h(x)^{3}$ however,  in the original variables the locus $\bar{y}=0$ does not make sense for two
reasons: the first one is that the model breaks down because of the hypothesis of
a spherical tumor. The second is that the system (\ref{Delisi0}) is not Lipschitz for $\bar{y}=0$.
Indeed one expects non--uniqueness as in the well known example $\bar{y}'=\bar{y}^{2/3}$. 
Nevertheless the threshold curve is still defined in the original variables $(x,\bar{y})$, and 
since the change or variables is $C^1$ outside this singular locus $\bar{y}=0$, the same dynamical behaviour occurs in the non-scaled variables.

\begin{figure}[ht]
    \centering
    \includegraphics[height=2in]{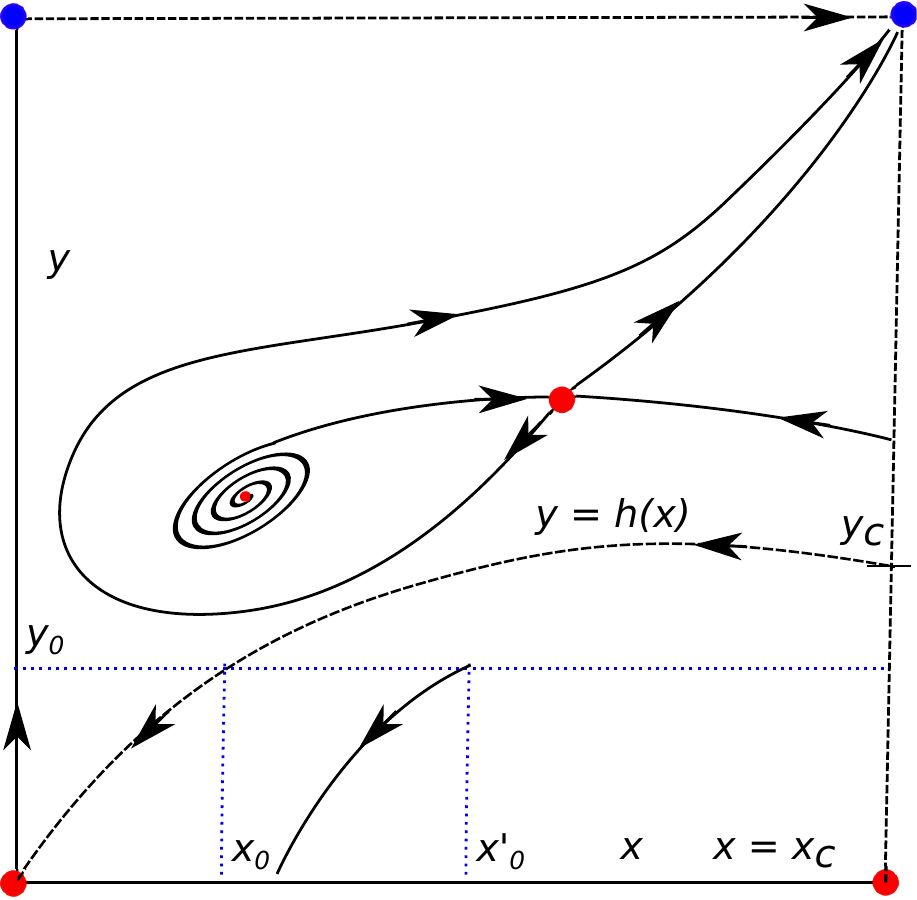}
    \caption{Ilustration of Threshold Theorem}
    \label{Threshold}
\end{figure}

\bigskip
According to the immune edition theory the relation between tumor cells and the immune system is made up of three phases (commonly known as the three E's of cancer): elimination, equilibrium and escape~\cite{Dun}.
Not in these terminology though, Delisi and Resigno \cite{Delisi},   describe these phases in terms of regions delimited by the zeroclines. For example the authors mention that within the region $x'<0$, $y'>0$ denote by $A$ in \cite{Delisi} solution evolves eventualy to escape to $x=x_c$, $y=\infty$. According to the Threshold Theorem~\ref{Threshold}, this is true for initial conditions above the curve $y=h(x)$.  
Here we describe in more detail the three phases according to the regions delimited by the invariant manifold  and basins of attraction. For example, the elimination phase is described as the region below the threshold curve; 
the explosive phase as the basin of attraction of the point at infinity obtained by the compactification of phase space along the $y$ direction (see Appendix~\ref{Blow-up}). The equilibrium phase are the basins of attraction of either a stable anti--saddle or a stable limit cycle. 

The existence of a Bautin bifurcation and the global bifurcation diagram continued numerically, implies the existence of a  triangular region in the plane of paramters $\lambda_1$--$\lambda_2$, for fixed values of $\alpha_1$, $\alpha_2$ and $x_c$ as shown in Figure~\ref{fig:diagram}. Within this region two limit cycles exist and the detailed analysis of the phase diagrams along the lines $CT$, $C'T'$ and $C''T''$ in Figure \ref{schema} and explained in the text, leads to the conclusion that the inner limit cycle is unstable and the exterior one is stable. These two limit cycles are shown in Figure~\ref{TwoLC1}, the correspondig plots agains the time are shown in Figure~\ref{TwoLC1-bis}. This implies that for an initial condition within the interior of the inner limit cycle, the solution tends asymptotically to the values of the stable equlibrium. This would correspond to the  equilibrium phase in the immunoedition theory.  Meanwhile for an initial condition just outside the unstable inner cycle, the population of cancer cells and lymphocytes  grow in amplitud and tends towards a periodic state but of larger amplitude. This yields a new type of qualitative behaviour predicted by the model. 

Escape phase in the immunedition theory corresponds to the basin of attraction of the point at infinity $x=x_c$, $y=+\infty$. The analysis in Appendix B shows that his point is stable, so there is an open set of  initial conditions leading to the escape phase. The basin of atraction of the point at infinity is delimited first by the threshold curve, and secondly by the unstable manifolds of the saddle point with positive coordinates here denoted as $(x_s,y_s)$. The structure of its stable and unstable branches delimits three types of behavior leading to escape. In the first one, for an initial condition $x_0>x_s$ and $y_0$ large enough,  there is a transitory evolution of diminishing values of cancer cells  $x(t)$ less that $x_s$ but finally leading to escape. This region is delimited by the unstable branch connecting $(x_s,y_s)$ and the point at infinity and the stable branch crossing the line $x=x_c$.  The second type of evolution leading to escape occurs for an intial condition of large values of initital population of lymphocites $x_0$ with a great diminishing of $x(t)$, namely less than $x_a$, the abscisa of the anti--saddle critical point $(x_a,y_a)$, following an increse of cancer cells and lymphocites leading finally to escape. This kind of solutions can be described as a turn around the anti--saddle before escaping. A third and more complex behaviour  occurs when the initial condition lies on the boundary of the basin of attraction of a limit cycle. In this situation a small perturbation can lead to oscilations of increasing magnitude and finally to escape.

\appendix

\section{Computation of the first Lyapunov exponent}
In this section we present the main procedure to compute of the first Lyapunov exponent at a Hopf point. 

Let $(x_0,y_0)$ be a critical point. Replacing $x=x_{0}+x_{1}$, $y=y_{0}+y_{1}$ in  (\ref{Delisi}), 
$$
\begin{array}{lcll}
      \frac{dx_{1}}{dt}&=&-\lambda_{1} (x_{0}+x_{1}) (1+x_{0}+x_{1}) +\alpha_{1} \left(1-\frac{x_{0}+x_{1}}{x_{c}}\right)(x_{0}+x_{1})(y_{0}+y_{1})^{2}&\\
    \frac{dy_{1}}{dt}&=&\lambda_{2}(y_{0}+y_{1})(1+x_{0}+x_{1})-\alpha_{2}(x_{0}+x_{1}),
\end{array}
$$
and expanding  we have
\begin{equation}\label{Delisi2}
\begin{array}{lcll}
      x'_{1}&=&a_{0}+a_{1}x_{1}+a_{2}y_{1}+a_{3}x_{1}^{2}+a_{4}y_{1}^{2}+a_{5}x_{1}y_{1}+a_{6}x_{1}y_{1}^{2}+a_{7}x_{1}^{2}y_{1}+a_{8}x_{1}^{2}y_{1}^{2}&\\
    y'_{1}&=&b_{0}+b_{1}x_{1}+b_{2}y_{1}+b_{3}x_{1}y_{1},
\end{array}
\end{equation}
where
\begin{eqnarray*}
a_{0}&=&-\lambda_{1}x_{0}(1+x_{0})+\alpha_{1}\left(1-\frac{x_{0}}{x_{c}}\right)x_{0}y_{0}^{2},\\
b_{0}&=&\lambda_{2}y_{0}(1+x_{0})-\alpha_{2}x_{0}.
\end{eqnarray*}
Of course $a_0=b_0$ yields the equations for the critical points.
The rest of the coefficients are
$$\begin{array}{rclrcl}
a_{1}&=&-\lambda (1+2x_{0})+\alpha_{1}\left(1-\frac{2x_{0}}{x_{c}}\right)y_{0}^{2},&
a_{2}&=&2\alpha_{1}x_{0}y_{0}\left(1-\frac{x_{0}}{x_{c}}\right),\\
a_{3}&=&-\lambda_{1}-\frac{\alpha_{1}y_{0}^{2}}{x_{c}},&
a_{4}&=&\alpha_{1} x_{0}\left(1-\frac{x_{0}}{x_{c}}\right),\\
a_{5}&=&2\alpha_{1}y_{0}\left(1-\frac{2x_{0}}{x_{c}}\right),&
a_{6}&=&\alpha_{1}\left(1-\frac{2x_{0}}{x_{c}}\right),\\
a_{7}&=&-2\frac{\alpha_{1}y_{0}}{x_{c}},&
a_{8}&=&-\frac{\alpha_{1}}{x_{c}},\\
b_{1}&=&\lambda_{2}y_{0}-\alpha_{2},&
b_{2}&=&\lambda_{2}(1+x_{0}),\\
b_{3}&=&\lambda_{2}.
\end{array}
$$

Consider the linear part $x'=Ax$ where $x=(x_1,y_1)^T$ and
$$A=  \begin{pmatrix}
 a_{1} & a_{2} \\ 
 b_{1} & b_{2}
 \end{pmatrix}
$$
Perform the linear  change of coordinates
\begin{equation}\label{Cambio}
    Y=Mx,
\end{equation}
where $Y=(Y_1,Y_2)^T$ and
$$
M=
\begin{pmatrix}
b_{2} & -a_{2}\\
a_{1}b_{2}-a_{2}b_{1} & 0
\end{pmatrix}$$ 
then the linear system is transformed into
$$
Y'=RY,\quad R=\begin{pmatrix} 0 & 1 \\ -\det(A) & \tr(A)
\end{pmatrix}
$$
Then $R$ has the canonical form
$$R=
\begin{pmatrix}
0 & 1\\
-\omega^2 & 2\mu
\end{pmatrix}
$$
where we have supposed  and set that $0<\det(A)\equiv \omega^2$, $\tr(A)=2\mu$ and $\mu^2-\omega^2<0$ so we have complex eigenvalues 
$\lambda=\mu \pm i \sqrt{\omega^{2}-\mu^{2}}$,

Let us consider that the real part of the eigenvalues is zero ($\mu=0$), then 
$$
R_{0}=\begin{pmatrix}
0 & 1\\
-\omega^{2} & 0
\end{pmatrix},
$$
and we want to find vectors $q_{0}$ y $p_{0}$, such that $R_{0}q_{0}=i\omega q_{0}$, $R_0^{T}p_{0}=-i\omega p_{0}$, $\langle p_{0},q_{0}\rangle =1$ and $ \langle p_{0},\bar{q}_{0}\rangle=0$.
We find
$$
q_{0}=\frac{1}{2 i \omega}\begin{pmatrix}
1\\
i \omega
\end{pmatrix}
$$
and 
$$p_{0}=\begin{pmatrix}
-i\omega\\
1
\end{pmatrix}.
$$

Let us trasform the complete system (\ref{Delisi2})  at a critical point with complex eigenvalues $\lambda\pm i \omega$
$$
x'=Ax+H_{2}(x)+H_{3}(x)+\cdots,
$$
by means of the change of variables
(\ref{Cambio}) then

\begin{eqnarray*}
Y'&=&Mx'\\
&=&MA_{0}x+MH_{2}(x)+MH_{3}(x)+\cdots,\\
&=&MA_{0}\left(M^{-1}Y\right)+MH_{2}\left(M^{-1}Y\right)+MH_{3}\left(M^{-1}Y\right)+\cdots \\
&=&
R\begin{pmatrix}
Y_{1}\\
Y_{2}
\end{pmatrix}+ K_{2}\begin{pmatrix}
Y_{1}\\
Y_{2}
\end{pmatrix}+K_{3}\begin{pmatrix}
Y_{1}\\
Y_{2}
\end{pmatrix}+\cdots
\end{eqnarray*}
where $K_l = MH_lM^{-1}$, for $l=1,2,\ldots$

Now introduce the complex variable $z$ by
$$\begin{pmatrix}
Y_{1}\\
Y_{2}
\end{pmatrix}=z q_{0}+\bar{z}\bar{q_{0}},$$
then system is reduced to the normal form
\begin{eqnarray*}
z'&=&\lambda z +\langle p_{0},K_{2}(z q_{0}+\bar{z}\bar{q_{0}})\rangle +\cdots \\
&=&\lambda z+ G_{2}(z,\bar{z})+G_{3}(z,\bar{z})+\cdots \\
&=& \lambda z+ \frac{g_{20}}{2}z^2 +g_{11}z\bar{z}+ \frac{g_{02}}{2}\bar{z}^2+\cdots
\end{eqnarray*}
where 
$$
G_{l}= \langle p_{0},K_{l}(z q_{0}+\bar{z}\bar{q_{0}})\rangle ,\quad l=2,3\ldots
$$
and $g_{ij} = \frac{1}{i!j!}\frac{\partial G_l}{\partial z^{i}\bar{z}^{j}}$, for $i,j=0,1,\ldots$ We will need the expansion up to third order terms, in particular  the coefficient $g_{11}$ at the third order

We will compute the first Lyapunov coefficients using the formulas (3.18) in \cite{Yuri} for the coefficient $c_1(0)$ of the Poincaré normal form and
\begin{equation}\label{l1}
\ell_1(0)= \frac{\mbox{Re}(c_1(0))}{\omega}
\end{equation}
where
$$
c_1(0)=\frac{g_{21}}{2}
+\frac{g_{20} g_{11}i\omega}{2\omega^2}-i\frac{g_{11}\bar{g_{11}}}{\omega} -i\frac{g_{02}\bar{g_{02}}}{6\omega}
$$

Observe that the change of coordinates (\ref{Cambio}) contains the coordinates of the critical point $(x_0,y_0)$ and so the coefficients $g_{ij}$. Therefore, we have to impose on  the formal expression we get using (\ref{l1}) from the coefficients $g_{ij}$ up to third order,  the restriction of a critical point, with zero real part and positive determinant equal to $\omega^2$. We achieve this as follows:
The expression (\ref{l1}) is a polynomial expression  depending on $(x_0,y_0)$ and the parameters $P(x_0,y_0,\lambda_1,\lambda_2,x_c,\alpha_1,\alpha_2)$.  Firstly we eliminate $y_0$ using (\ref{y0}) obtaining a polynomial expression in $x_0$ of order 19 and the parameters  and  still denote by $P(x_0,\lambda_1,\lambda_2,x_c,\alpha_1,\alpha_2)$. The abscisa $x_0$ of the critical point satisfy the cubic equation (\ref{x0}) written here as $Q(x_0,\lambda_,\psi,x_c)$.
Suprisingly, the coefficients of $P$ and $Q$ can be expressed solely in terms of the combination of parameters $\lambda$, $\psi$ and $x_c$. Next
we  eliminate $x_0$ using the resultant
$$
R_1(\lambda_,\psi,x_c)=\Res(P(x_0,\lambda_,\psi,x_c),Q(x_0,\lambda_,\psi,x_c),x_0).
$$
Also the Hopf surface can be expressed in terms of the same combination of parameters as shown in (\ref{Hopf}) as $R_2(\lambda,\psi,x_c)=0$, then we compute
$$
R_3(\lambda,x_c)= \Res(R_1(\lambda,\psi,x_c),R_2(\lambda,\psi,x_c),\psi)
$$ 
and we get from a  non trivial factor of $R_3$
\begin{equation}\label{bautin}
    \lambda=\frac{-3+x_c}{3(1+x_c)}
\end{equation}

Finally, substituting (\ref{bautin}) in the Hopf surface $R_2(\lambda_,\psi,x_c)=0$ we get the nonnegative solution
$$
\psi=\frac{\sqrt{x_c}\left(
(-27+x_c)\sqrt{x_c}+ (9+x_c)^{3/2}
\right)}{27(1+x_c)^2}.
$$

\section{Blow up of infinity\label{Blow-up}}
In order to study solutions that escape to infinity in the direction $y\to\infty$ we perform a blow up of infinity 
by the change of variables $(x,y)\mapsto (x,v=x/y)$, a further rescaling of time $dt/dt'=v^2$ extends  the system up to $v=0$ corresponding to infinity $y=\infty$, $x>0$ (\ref{Delisi}) 
\begin{eqnarray}
\frac{dx}{dt'}&=&-\lambda_1 x(1+x) v^2+\alpha_1 x^3\left(1-\frac{x}{x_c}\right),\nonumber\\
\frac{dy}{dt'} &=& vx\left(\alpha_1 x\left(1-\frac{x}{x_c}\right)-\lambda_2 \right)+\alpha_2 v^2 -v^3(\lambda_1(1+x)+\lambda_2) .\label{infinity}
\end{eqnarray}
We see that $v=0$ becomes invariant and the reduced system at infinity is
$$
\frac{dx}{dt'}= x^3(1-x)
$$
showing that  along $v=0$, $x>0$,  $x=x_c$ an attractor.

To determine the local phase portrait  of system (\ref{infinity}) at the critical point $x=x_c,v=0$, we compute  its linearization 
$$
A=\begin{pmatrix}
- x_c^2 \alpha_1 & 0\\ 0 & - x_c\lambda_2
\end{pmatrix}
$$
thus $(x_c,v=0)$ is an attractor. The origin $x=0=v$ is also a degenerate critical point with zero linear part with terms of third order the least. Performing a radial blow using polar coordinates $x=r\cos{\theta}$, $v=r\sin{\theta}$
we get
\begin{eqnarray*}
\frac{dr}{dt}&=& r(-\lambda_1+\alpha_1\cot^2{\theta}-\lambda_2\sin^2{\theta})+\\
&& r^2\left(-\lambda_1\cos^2{\theta}+\alpha_2\sin^3{\theta}-\frac{\alpha_1}{x_c}\cot^2{\theta}-(\lambda_1+\lambda_2)\cos{\theta}\sin^2{\theta}\right)\\
\frac{d\theta}{dt}&=&-\lambda_2\cos{\theta}\sin{\theta}- r\cos{\theta}\sin{\theta}(\lambda_2\cos{\theta}-\alpha_2\sin{\theta})
\end{eqnarray*}
which shows that $r=0$, $0<\theta<\pi/2$ is invariant. Setting $r=0$ we get
$$
\frac{d\theta}{dt}=-\lambda_2\cos{\theta}\sin{\theta}
$$
which is always negative for $0<\theta<\pi/2$. Thus the origin is a degenerate critical point with a hyperbolic sector.

\section{Numerical continuation}
Following \cite{Dan} we take the  numerical values
$$
\lambda_1 = 0.01, \, \lambda_2 = 0.006672, \, \alpha_1 = 0.297312, \,\alpha_2 = 0.00318,\, xc = 2500
$$
satisfying conditions (\ref{Delisi}) for a BT bifurcation, and the coordinates $x_0=1.9976$, $y_0=0.317619$ for the critical point, according to (\ref{y0}), (\ref{x0}). 
 Figure~\ref{fig:diagram}~(a)--(b) shows the family of homoclinic connections in phase space,  originating from  the BT critical point. Since continuing the family of homoclinics from the BT point sometimes is difficult (see \cite{Hdaibat2}),  for the computation of the initial member of the family of homoclinics we use the homotophy method near the previous values of $\lambda_1$, $\lambda_2$ and then continue forward and backward to assure that the family originates from the BT point. The curve of homoclinics is shown in Figure~\ref{fig:diagram} as the violet curve. The Bautin point (GH) is detected by continuing the Hopf curve from the BT point.

\begin{figure}[ht]
    \centering
    \subfloat[Continuation of the homoclinic orbit of avascular Delisi model (\ref{Delisi}) near Bogdanov-Takens bifurcation.]{
    \includegraphics[height=1.7in]{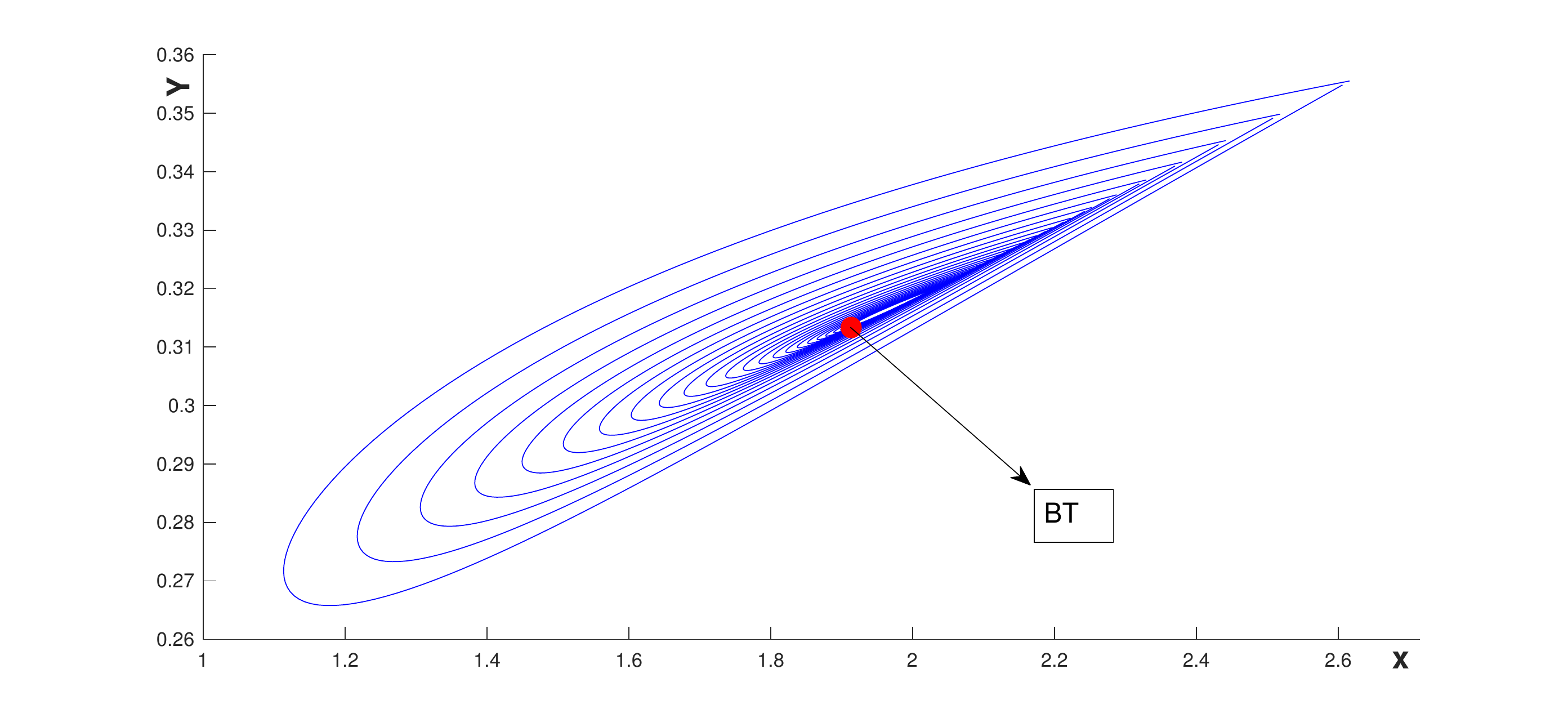}
    }\\ 
 \subfloat[Bifurcations  of avascular Delisi model (\ref{Delisi}) from parameters values $\alpha_{1}=0.297312$, $\alpha_{2}=0.00318$ and $x_c=2500$.]{
    \includegraphics[height=2in]{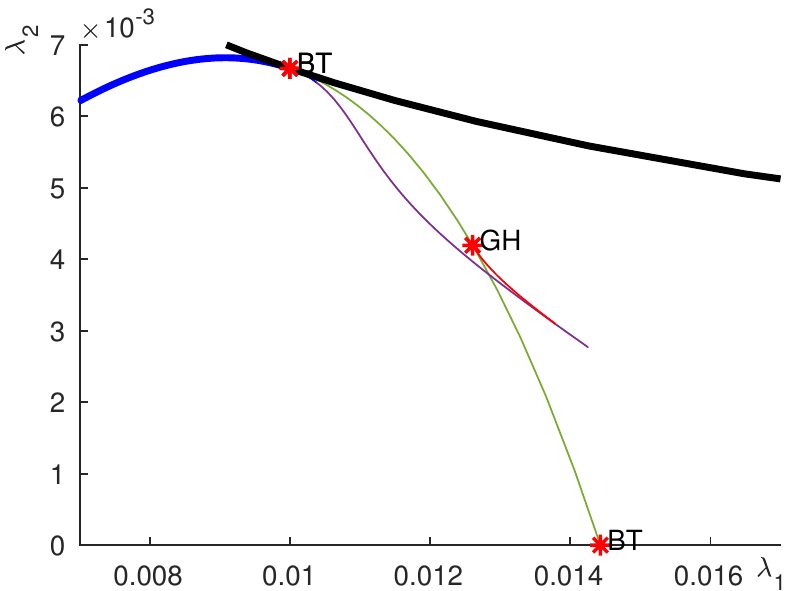}
    }
\caption{Numerical continuation of bifurcation diagram with MatCont. Saddle-node: black; Hopf: green; limit point of cycles: red; symmetric saddles:blue; homoclinic: violet.}
\label{fig:diagram}    
\end{figure}

The Delisi model diagram bifurcation is shown in  Figure \ref{fig:diagram}~(b). The saddle-node bifurcation curve is shown in black, the green corresponds to the Hopf bifurcation, the curve in red corresponds to the saddle-node bifurcation of periodic orbits (limit point of cycles) and the blue one to  symmetric saddles.

\end{document}